 \newtheorem{thm}{Theorem}[section]
 \newtheorem{cor}[thm]{Corollary}
 \newtheorem{lem}[thm]{Lemma}
 \newtheorem{prop}[thm]{Proposition}
 \theoremstyle{definition}
 \theoremstyle{remark}
 \newtheorem{rem}[thm]{Remark}
 \numberwithin{equation}{section}
\def\eps{\varepsilon}
\def\S{{S}}
\newcommand{\cA}{\mathcal A}
\newcommand{\cB}{\mathcal B}
\newcommand{\cD}{\mathcal D}
\newcommand{\cE}{\mathcal E}
\newcommand{\cN}{\mathcal N}
\newcommand{\cMH}{\mathcal{MH}}
\newcommand{\cSM}{\mathcal{SM}}
\newcommand{\cSX}{\mathcal{SX}}
\newcommand{\R}{\mathbb R}
\newcommand{\LL}{\mathbb L}
\newcommand{\EE}{\mathbb E}
\newcommand{\FF}{\mathbb F}
\newcommand{\nn}{\nonumber}
\newcommand{\lj}{[\![}
\newcommand{\rj}{]\!]}
\newcommand{\pd}{\partial}
\begin{document}

%-------------------------------------------------------------------------
% editorial commands: to be inserted by the editorial office
%
%\firstpage{1} \volume{228} \Copyrightyear{2004} \DOI{003-0001}
%
%
%\seriesextra{Just an add-on}
%\seriesextraline{This is the Concrete Title of this Book\br H.E. R and S.T.C. W, Eds.}
%
% for journals:
%
%\firstpage{1}
%\issuenumber{1}
%\Volumeandyear{1 (2004)}
%\Copyrightyear{2004}
%\DOI{003-xxxx-y}
%\Signet
%\commby{inhouse}
%\submitted{March 14, 2003}
%\received{March 16, 2000}
%\revised{June 1, 2000}
%\accepted{July 22, 2000}
%
%
%
%---------------------------------------------------------------------------
%Insert here the title, affiliations and abstract:
%

\title[Incompressible Two-Phase Flows with Phase Transitions]
 {On Incompressible Two-Phase Flows with Phase Transitions\\ and Variable Surface Tension}

%----------Author 1
\author[J.~Pr\"uss]{Jan Pr\"uss}
\address
{Institut f\"ur Mathematik \\
         Martin-Luther-Universit\"at Halle-Witten\-berg\\
         D-06099 Halle, Germany}
\email{jan.pruess@mathematik.uni-halle.de}
%\thanks{This work was completed with the support of our\TeX-pert.}

%----------Author 2
\author[S.~Shimizu]{Senjo Shimizu}
\address
{Department of Mathematics, Shizuoka University\\
      422-8529 Shizuoka, Japan}
\email{ssshimi@ipc.shizuoka.ac.jp}
\thanks{The work of SS was partially supported by JSPS Grant-in-Aid for Scientific Research (B) \#24340025 
and Challenging Exploratory Research \#23654048. 
The work of GS was partially supported by a grant from the Simons Foundation (\#245959) and by
a grant from NSF (DMS-1265579).}

%----------Author 3
\author[G.~Simonett]{Gieri Simonett}
\address
{Department of Mathematics\\
         Vanderbilt University \\
         Nashville, TN~37240, USA}
\email{gieri.simonett@vanderbilt.edu}
%\thanks{The work of GS was partially supported by a grant from the Simons Foundation (\#245959) and by
%a grant from NSF (DMS-1265579).}

%----------Author 4
\author[M.~Wilke]{Mathias Wilke}
\address
{Institut f\"ur Mathematik \\
         Martin-Luther-Universit\"at Halle-Witten\-berg\\
         D-06099 Halle, Germany}
\email{mathias.wilke@mathematik.uni-halle.de}

%----------classification, keywords, date
\subjclass{Primary 35R35; Secondary 35Q30, 76D45, 76T10}

\keywords{Two-phase Navier-Stokes equations, surface tension, phase transitions, kinetic undercooling, 
Marangoni forces,  entropy, semiflow, stability, compactness, generalized principle of linearized stability, convergence to equilibria.}

\date{September 30, 2013}
%----------additions
\dedicatory{Dedicated to Professor Yoshihiro Shibata on the occasion of his 60th anniversary}
%%% ----------------------------------------------------------------------

\begin{abstract}
Our study of the basic model for incompressible two-phase flows with phase transitions consistent with  thermodynamics \cite{PSSS12}, \cite{PrSh12}, \cite{PSW13}, \cite{PSZ12} is extended to the case of temperature-dependent surface tension. We prove well-posedness in an $L_p$-setting, study the stability of the equilibria of the problem, and show that a solution which does not develop singularities exists globally, and if its limit set contains a stable equilibrium it converges to this equilibrium in the natural state manifold for the problem as time goes to infinity.
\end{abstract}

%%% ----------------------------------------------------------------------
\maketitle
%%% ----------------------------------------------------------------------
%\tableofcontents
\section{Introduction}
Let $\Omega\subset \R^{n}$ be a bounded domain of class $C^{3}$, $n\geq2$.
$\Omega$ contains two phases: at time $t$, phase $i$ occupies
subdomain $\Omega_i(t)$ of
$\Omega$. Assume $\partial \Omega_1(t)\cap\partial \Omega=\emptyset$; this means no
{\em boundary intersection}.
The closed compact hypersurface $\Gamma(t):=\partial \Omega_1(t)\subset \Omega$
forms the interface between the phases, which is allowed to be disconnected.
\goodbreak

We let
$u$ denote the velocity field,
$\pi$ the pressure field,
$T$ the stress tensor,  $D(u)=(\nabla u +[\nabla u]^{\sf T})/2$ the rate of strain tensor,
 $\theta$  the (absolute) temperature field,
 $\nu_\Gamma$ the outer normal  of $\Omega_1$,
 $u_\Gamma$ the velocity field of the interface,
 $V_\Gamma=u_\Gamma\cdot\nu_\Gamma$ the normal velocity of $\Gamma(t)$,
 $H_\Gamma=H(\Gamma(t))=-{\rm div}_\Gamma \nu_\Gamma$ the curvature of $\Gamma(t)$,
 $j_\Gamma$ the phase flux, and
 $[\![v]\!]=v_2-v_1$ the jump of the quantities $v_i\in C(\overline\Omega_i(t))$ across $\Gamma(t)$.

Several quantities are derived from the specific {\em free energy densities} $\psi_i(\theta)$ as follows. $\epsilon_i(\theta):= \psi_i(\theta)+\theta\eta_i(\theta)$ means the
specific internal energy density in phase $i$, $\eta_i(\theta) =-\psi_i^\prime(\theta)$ the density of the specific entropy,
 $\kappa_i(\theta):= \epsilon^\prime_i(\theta)>0$ the  heat capacity, and
$l(\theta)=\theta[\![\psi^\prime(\theta)]\!]=-\theta[\![\eta(\theta)]\!]$ the latent heat. Further $d_i(\theta)>0$ denotes the coefficient of heat conduction in Fourier's law, $\mu_i(\theta)>0$ the viscosity in Newton's law, $\rho_1,\rho_2>0$ the constant, positive densities of the phases,
  and $\sigma>0$ the (coefficient of) surface tension.
 In the sequel we drop the index $i$, as there is no danger of confusion; we just keep in mind that the coefficients depend on the phases.

In the previous papers \cite{PSW13} and \cite{PSZ12} we have mathematically analyzed the following problem with sharp interface:

\medskip

\noindent
Find a family of closed compact hypersurfaces $\{\Gamma(t)\}_{t\geq0}$ contained in $\Omega$
and appropriately smooth functions $u:\R_+\times \bar{\Omega} \to \R^n$, and $\pi,\theta:\R_+\times\bar{\Omega}\rightarrow\R$ such that
%%%%%%%%%
\begin{equation}
\hspace{1.8cm}
%\left\{
\begin{aligned}
\label{NS}
\rho(\partial_t u + u\cdot\nabla u) -{\rm div }\, T&=0  \quad && \mbox{in }\;  \Omega\setminus \Gamma(t),\\
T=2\mu(\theta)D(u)-\pi I,\quad {\rm div}\, u&=0 \quad && \mbox{in }\; \Omega\setminus \Gamma(t),\\
[\![\frac{1}{\rho}]\!]j_\Gamma^2\nu_\Gamma-[\![T\nu_\Gamma]\!] =\sigma H_\Gamma \nu_\Gamma,\quad
[\![u]\!]&= [\![\frac{1}{\rho}]\!] j_\Gamma \nu_\Gamma \quad && \mbox{on }\Gamma(t),\\
u=0 \;\mbox{ on }\; \partial\Omega,\quad u(0)&=u_0 \quad &&\mbox{in }\;\Omega.
\end{aligned}
%\right.
\end{equation}
%%%%%%%%%%%
\\
\begin{equation}
%\left\{
\begin{aligned}
\label{Heat}
\rho\kappa(\theta)(\partial_t \theta  + u\cdot\nabla \theta)\!-\!{\rm div}\,(d(\theta)\nabla\theta)\!-\!2\mu|D(u)|_2^2
&=0 \quad &&\mbox{ in }\; \Omega\setminus \Gamma(t),\\
-l(\theta)j_\Gamma -[\![d(\theta)\partial_{\nu_\Gamma} \theta]\!]=0,  \quad
\mbox{}[\![\theta]\!]&=0\quad &&\mbox{ on }\;\Gamma(t),\\
\partial_\nu \theta =0\;\mbox{ on }\partial\Omega,\quad
\theta(0)&=\theta_0\quad &&\mbox{ in }\;\Omega.
\end{aligned}
%\right.
\end{equation}
%%%%%%%%%%%
\\
\begin{equation}
\hspace{-4mm}
%\left\{
\begin{aligned}
\label{GTS}
[\![\psi(\theta)]\!] +[\![\frac{1}{2\rho^2}]\!]j_\Gamma^2 -[\![\frac{T\nu_\Gamma\cdot\nu_\Gamma}{\rho}]\!]&=0
\quad && \mbox{on }\Gamma(t),\\
V_\Gamma =u_\Gamma\cdot\nu_\Gamma &= u\cdot\nu_\Gamma -\frac{1}{\rho} j_\Gamma\quad &&\mbox{on }\Gamma(t),\\
\Gamma(0)&=\Gamma_0.
\end{aligned}
%\right.
\end{equation}
This model is explained in more detail in \cite{PSSS12}; see also \cite{Gur07, IsTa06}. It is thermodynamically consistent in the sense that in absence of exterior forces and heat sources, the total mass  and the total energy are preserved, and the total entropy is nondecreasing. This model is in some sense the simplest non-trivial sharp interface model for incompressible Newtonian two-phase flows taking into account phase transitions driven by temperature.

Note that in this model neither {\em kinetic undercooling} nor {\em temperature dependence of the surface tension} $\sigma$, i.e.\ {\em Marangoni forces}, have been taken into account. It is the aim of the present paper to remove these shortcomings. Here we concentrate on the case of constant densities $\rho_i>0$ which are not equal, i.e.\ $[\![\rho]\!]\neq0$.
To achieve this goal, the model has to be adjusted carefully. Surface tension $\sigma(\theta_\Gamma)$ here is precisely the {\em free surface energy density}. Therefore we define, in analogy to the bulk case, the {\em surface energy density} $\epsilon_\Gamma$, the {\em surface entropy density} $\eta_\Gamma$, the {\em surface heat capacity} $\kappa_\Gamma$, and {\em surface latent heat} $l_\Gamma$ by means of the relations
\begin{align*}
\epsilon_\Gamma(\theta_\Gamma) = \sigma(\theta_\Gamma)+\theta_\Gamma\eta_\Gamma(\theta_\Gamma),\quad & \eta_\Gamma(\theta_\Gamma)=-\sigma^\prime(\theta_\Gamma)\\
\kappa_\Gamma(\theta_\Gamma) = \epsilon_\Gamma^\prime(\theta_\Gamma)=-\theta_\Gamma\sigma^{\prime\prime}(\theta_\Gamma),\quad &l_\Gamma(\theta_\Gamma)=\theta_\Gamma \sigma^\prime(\theta_\Gamma).
\end{align*}
Then {\em total surface energy} will be
$${\sf E}_\Gamma= \int_\Gamma \epsilon_\Gamma(\theta_\Gamma) d\Gamma,$$
and {\em total surface entropy} reads
$$\Phi_\Gamma =\int_\Gamma \eta_\Gamma(\theta_\Gamma) d\Gamma.$$
Note that in case $\sigma=const$ we have ${\sf E}_\Gamma = \sigma |\Gamma|$ and $\Phi_\Gamma=0$.
We point out that experiments have shown that in certain situations surface heat capacity cannot be neglected, see \cite{DaWa07}.
In case $\kappa_\Gamma$ is not identically zero (i.e.\ if $\sigma$ is not a linear function of $\theta_\Gamma$), there will also be a non-trivial {\em surface heat flux} $q_\Gamma$ which we assume to satisfy Fourier's law, that is
$$q_\Gamma=- d_\Gamma(\theta_\Gamma) \nabla_\Gamma \theta_\Gamma,$$ 
with coefficient of surface heat diffusivity $d_\Gamma(\theta_\Gamma)>0$.
In analogy to the bulk, the surface heat flux induces the contribution 
$\int_\Gamma -(q_\Gamma\cdot\nabla_\Gamma)/\theta_\Gamma^2\, d\Gamma$ to the production of surface entropy. {\em Kinetic undercooling} with coefficient $\gamma(\theta_\Gamma)>0$ produces surface entropy 
$\int_\Gamma \gamma(\theta)j_\Gamma^2/\theta_\Gamma\,d\Gamma$. Following the derivation in \cite{PSSS12}, this leads to the following three modifications of the system \eqref{NS}, \eqref{Heat}, \eqref{GTS}. The momentum balance on the interface becomes
$$[\![\frac{1}{\rho}]\!]j_\Gamma^2\nu_\Gamma-[\![T\nu_\Gamma]\!]
=\sigma(\theta_\Gamma) H_\Gamma \nu_\Gamma + \sigma^\prime(\theta_\Gamma)\nabla_\Gamma\theta_\Gamma.$$
The energy balance on the interface reads
$$\kappa_\Gamma(\theta_\Gamma)\frac{D}{Dt}\theta_\Gamma +{\rm div}_\Gamma q_\Gamma= [\![d(\theta)\partial_{\nu_\Gamma}\theta]\!]
+ l(\theta)j_\Gamma+l_\Gamma(\theta_\Gamma){\rm div}_\Gamma u_\Gamma+\gamma(\theta_\Gamma)j_\Gamma^2,$$
and the Gibbs-Thomson law changes to
$$ [\![\psi(\theta)]\!] + [\![\frac{1}{2\rho^2}]\!]j_\Gamma^2 -[\![\frac{T\nu_\Gamma\cdot\nu_\Gamma}{\rho}]\!]
=-\gamma(\theta_\Gamma) j_\Gamma.$$
Here $D/Dt$ denotes the Lagrangian derivative coming from the velocity $u_\Gamma$ of the interface, and we employ the symbol $P_\Gamma$ for the orthogonal projection onto the tangent bundle of $\Gamma$.
Note that $\theta_\Gamma=\theta{|_\Gamma}$ is the trace of $\theta$ on $\Gamma$ as $[\![\theta]\!]=0$. We assume the tangential part of $u$ to be continuous across $\Gamma$, i.e.\ $[\![P_\Gamma u]\!]=0$, and $P_\Gamma u_\Gamma =P_\Gamma u{|_\Gamma}$. 
Then the quantities $j_\Gamma$, $V_\Gamma$  and $u_\Gamma$ can be expressed as
$$ j_\Gamma=[\![u\cdot\nu_\Gamma]\!]/[\![1/\rho]\!],\quad 
V_\Gamma=[\![\rho u\cdot \nu_\Gamma]\!]/[\![\rho]\! ],\quad u_\Gamma = P_\Gamma u + V_\Gamma\nu_\Gamma.$$
The complete extended model now reads as follows.

\medskip

\noindent
In the bulk $\Omega\setminus\Gamma(t)$:
\begin{align}\label{bulk}
\rho(\partial_t u + u\cdot \nabla u) -2{\rm div}(\mu(\theta)D(u)) +\nabla \pi&=0,\nn\\
2D(u)=\nabla u +[\nabla u]^{\sf T},\quad {\rm div}\, u &=0,\\
\rho\kappa(\theta)(\partial_t\theta +u\cdot\nabla \theta)- {\rm div}(d(\theta)\nabla\theta)&=2\mu(\theta)|D(u)|_2^2.\nn
\end{align}
On the interface $\Gamma(t)$:
\begin{align}\label{interface}
&[\![P_\Gamma u]\!]=0,\; P_\Gamma u_\Gamma 
=P_\Gamma u{|_\Gamma},\; [\![u\cdot\nu_\Gamma]\!]=[\![1/\rho]\!]j_\Gamma,\:
[\![\theta]\!]=0,\; \theta_\Gamma = \theta{|_\Gamma}, \nn\\
&[\![1/\rho]\!]j_\Gamma^2\nu_\Gamma-2[\![\mu(\theta)D(u)\nu_\Gamma]\!] +[\![\pi]\!]\nu_\Gamma=\sigma(\theta_\Gamma)H_\Gamma\nu_\Gamma+\sigma^\prime(\theta_\Gamma)\nabla_\Gamma \theta_\Gamma,\nn\\
&\kappa_\Gamma\frac{D}{Dt}\theta_\Gamma -{\rm div}_\Gamma(d_\Gamma(\theta_\Gamma)\nabla_\Gamma\theta_\Gamma)=\\ 
&\hspace{2cm}=[\![d(\theta)\partial_\nu\theta]\!] + l(\theta)j_\Gamma
+\gamma(\theta_\Gamma)j_\Gamma^2+l_\Gamma(\theta_\Gamma){\rm div}_\Gamma u_\Gamma,\nn\\
&[\![\psi(\theta)]\!] +[\![1/2\rho^2]\!]j_\Gamma^2-2[\![\mu(\theta) D(u)\nu_\Gamma\cdot\nu_\Gamma/\rho]\!]+[\![\pi/\rho]\!]
=-\gamma(\theta_\Gamma)j_\Gamma,\nn \\
&V_\Gamma=u\cdot\nu_\Gamma-j_\Gamma/\rho.\nn
\end{align}
On the outer boundary $\partial\Omega$:
$$ u=0,\quad \partial_\nu \theta=0.$$
Initial conditions:
$$ \Gamma(0)=\Gamma_0,\quad u(0)=u_0, \quad \theta(0)=\theta_0.$$
This model has conservation of total mass and total energy, and total entropy is non-decreasing. Indeed, along smooth solutions we have
\begin{align*}
\frac{d}{dt}(\Phi_b(t)+\Phi_\Gamma(t))&= \int_{\Omega} [2\mu(\theta)|D(u)|_2^2/\theta +d(\theta)|\nabla\theta|^2/\theta^2]dx\\
&+ \int_\Gamma [d_\Gamma(\theta_\Gamma)|\nabla_\Gamma\theta_\Gamma|^2/\theta_\Gamma^2 + \gamma(\theta_\Gamma) j_\Gamma^2/\theta_\Gamma ]
 d\Gamma\geq0,
\end{align*}
where $\Phi_b=\int_\Omega \rho\eta(\theta) dx$.

For the sake of well-posedness we assume that $\psi_i$ and $\sigma$ are strictly concave. Experiments show that $\sigma$ is also strictly decreasing and positive for low temperatures. Therefore, $\sigma$ has precisely one zero $\theta_c>0$ which we call the {\em critical temperature}. As the problem in the range $\theta>\theta_c$ is no longer well-posed, we restrict our attention to the interval $\theta\in(0,\theta_c)$. In all of the paper we impose the following assumptions.

\bigskip

\noindent
{\bf a) Regularity.}
$$\mu_i,d_i,d_\Gamma,\gamma\in C^2(0,\theta_c),\quad \psi_i,\sigma\in C^3(0,\theta_c).$$
\noindent
{\bf b) Well-posedness.}
$$ \kappa_i,\kappa_\Gamma,\mu_i,d_i,d_\Gamma,\sigma>0,\; \gamma\geq0 \; \mbox{ in }\; (0,\theta_c),
\quad  0<\theta_0<\theta_c \; \mbox{ in }\; \bar{\Omega}.$$
%$$ 0<\theta_0<\theta_c  \; \mbox{ in }\; \bar{\Omega}.$$

\noindent
{\bf c) Compatibilities.}
$$ {\rm div}\,u_0=0 \mbox{ in } \Omega\setminus\Gamma_0,$$
$$2P_{\Gamma_0}[\![\mu(\theta_0)D(u)\nu_{\Gamma_0}]\!]+\sigma^\prime(\theta_0)\nabla_{\Gamma_0}\theta_0=0,
\quad P_{\Gamma_0}u_{\Gamma}(0)= P_{\Gamma_0}{u_0}{|_{\Gamma_0}}$$
$$P_{\Gamma_0}[\![u_0]\!]=0,\; [\![\theta_0]\!]=0,\; \theta_{\Gamma}(0)={\theta_{0}}{|_{\Gamma_0}},\;
{u_0}{|_{\partial\Omega}}=0,\; \partial_\nu {\theta_0}{|_{\partial\Omega}}=0.$$

\bigskip

\noindent
Below we present a rather complete analysis of problem \eqref{bulk}, \eqref{interface} which parallels that in \cite{PSW13} where the simpler case
$\sigma>0$ constant and $\gamma\equiv0$ has been studied.  We obtain local well-posedness of the problem in an $L_p$-setting, prove that the stability properties of equilibria are the same as in \cite{PSW13}, and we show that any bounded solution that does not develop singularities converges to an equilibrium as $t\to\infty$ in the state manifold $\cSM$ which is the same as in \cite{PSW13}.

\section{Total Entropy and Equilibria}
\noindent
{\bf (a)} As we have seen in Section 1, the total mass ${\sf M}=\int_{\Omega\setminus\Gamma}\rho\,dx $ and 
the total energy 
\begin{equation*}
{\sf E}={\sf E}(u,\theta,\theta_\Gamma,\Gamma)
:=\int_{\Omega\setminus\Gamma}\{(\rho/2)|u|^2 + \rho \eps(\theta)\}\,dx +\int_\Gamma \epsilon_\Gamma(\theta_\Gamma)\,d\Gamma
\end{equation*}
are conserved, and the total entropy
\begin{equation*}
\Phi=\Phi(\theta,\theta_\Gamma,\Gamma)
:=\int_{\Omega\setminus\Gamma} \rho\eta(\theta)\,dx + \int_\Gamma \eta_\Gamma(\theta_\Gamma)\,d\Gamma
\end{equation*}
is nondecreasing along smooth solutions. 
Even more,
$-\Phi$ is a strict Lyapunov functional in the sense that it is strictly decreasing along
 smooth solutions which are non-constant in time.
Indeed, if at some time
$t_0\ge 0$ we have
 \begin{equation*}
 \frac{d}{dt}\Phi(u(t_0),\Gamma(t_0)) =0,
 \end{equation*}
 then
 \begin{equation*}
 \int_\Omega[2\mu(\theta)|D(u)|^2/\theta+ d(\theta)|\nabla \theta|^2/\theta^2] dx+ \int_\Gamma [d_\Gamma(\theta_\Gamma)|\nabla \theta_\Gamma|^2/\theta^2+\gamma(\theta_\Gamma)j_\Gamma^2]\, d\Gamma = 0,
 \end{equation*}
which yields $D(u(t_0))=0$ and $\nabla \theta(t_0)=0$
in $\Omega$,  as well as $\nabla_\Gamma \theta_\Gamma(t_0)=0$ and $j_\Gamma(t_0)=0$ on $\Gamma(t_0)$. As in \cite{PSW13} this implies $u(t_0)=0$ and $\theta(t_0)=const=\theta_\Gamma(t_0)$ in $\Omega$. From the equations we see that 
$(V_\Gamma(t_0),j_\Gamma(t_0))=(0,0)$ and therefore $\pi$ is also constant in the components of the phases, and
\begin{equation}
\begin{aligned}
\label{eq-cond}
&[\![\pi]\!]= \sigma(\theta_\Gamma)H_\Gamma,\\
&[\![\psi(\theta_\Gamma)]\!] + [\![\pi/\rho]\!]=0.\nn
\end{aligned}
\end{equation}
These relations show that the curvature $H_\Gamma$ is constant over all of $\Gamma$, and it determines the values of the pressures in the phases, in particular $\pi$ is constant in each phase, not only in its components.
Since $\Omega$ is bounded, we may conclude that $\Gamma(t_0)$ is a union of finitely many, say $m$, disjoint spheres of equal radius,
i.e.\ $(u(t_0),\theta(t_0),\Gamma(t_0))$ is an equilibrium.
Therefore, the {\em limit sets} of solutions in the state manifold $\cSM_\Gamma$,  to be defined below, are contained in the
$(mn+2)$-dimensional manifold of equilibria
\begin{align}
\label{equilibria-I}
\cE&=\big\{\big(0,\theta_\ast,\bigcup_{1\le l\le m}S_{R_\ast}(x_l)\big):
\theta_\ast\in (0,\theta_c),
\bar B_{R_\ast}(x_l)\subset \Omega,\\
&\hspace{4cm} \bar{B}_{R_\ast}(x_l)\cap \bar{B}_{R_\ast}(x_k)=\emptyset, k\neq l\big\},\nonumber
\end{align}
where $S_{R_\ast}(x_l)$ denotes the sphere with radius $R_\ast$ and center $x_l$.
Here $R_\ast>0$ is uniquely determined by the total mass and by the number $m$ of spheres, and $\theta_*$ is uniquely given by the total energy.

\bigskip

\noindent
{\bf b)}\, Another interesting observation is the following. Consider the
critical points of the functional $\Phi(u,\theta,\theta_\Gamma,\Gamma)$ with constraint 
${\sf M}={\sf M}_0$, ${\sf E}(u,\theta,\theta_\Gamma,\Gamma)={\sf E}_0$,
say on
$$\{(u,\theta,\theta_\Gamma,\Gamma):\, (u,\theta)\in BUC(\bar{\Omega}\setminus\Gamma)^{n+1},
\Gamma\in \cMH^2(\Omega),\theta_\Gamma\in C(\Gamma)\},$$
 see below for the definition of
$\cMH^2(\Omega)$. So here we do not assume from the beginning that $\theta$ is continuous across $\Gamma$, and $\theta_\Gamma$ denotes surface temperature.
Then by the method of Lagrange multipliers, there
are constants $\lambda,\mu\in\R$ such that at a critical point $(u_*,\theta_*,\theta_{\Gamma*},\Gamma_*)$ we have
\begin{equation}
\label{VarEq} \Phi^\prime(u_*,\theta_*,\theta_{\Gamma*},\Gamma_*)+\lambda {\sf M}^\prime(\Gamma_*)+\mu {\sf E}^\prime(u_*,\theta_*,\theta_{\Gamma*},\Gamma_*)=0.
\end{equation}
The derivatives of the functionals are given by
\begin{align*}
\langle \Phi^\prime(u,\theta,\theta_\Gamma,\Gamma) |(v,\vartheta,\vartheta_\Gamma,h)\rangle
&= (\rho\eta^\prime(\theta)|\vartheta)_\Omega + (\eta^\prime_\Gamma(\theta_\Gamma)|\vartheta_\Gamma)_\Gamma\\ &-([\![\rho \eta(\theta)]\!]+\eta_\Gamma(\theta_\Gamma)H(\Gamma)|h)_{\Gamma},\\
\langle {\sf M}^\prime(\Gamma)|h\rangle &= -([\![\rho]\!]|h)_\Gamma,
\end{align*}
with $H(\Gamma):=H_\Gamma$, and
\begin{align*}
\langle {\sf E}^\prime(u,\theta,\theta_\Gamma,\Gamma) |(v,\vartheta,\vartheta_\Gamma,h)\rangle &= (\rho u|v)_\Omega+(\rho\epsilon^\prime(\theta)|\vartheta)_\Omega
+(\epsilon^\prime_\Gamma(\theta_\Gamma)|\vartheta_\Gamma)_\Gamma\\ 
&-([\![(\rho/2)|u|^2 + \rho\epsilon(\theta)]\!]+\epsilon_\Gamma(\theta_\Gamma) 
H(\Gamma)|h)_{\Gamma}.\end{align*}
Setting first $(u,\vartheta_\Gamma,h)=(0,0,0)$ and varying $\vartheta$ in \eqref{VarEq} we obtain
$$\eta^\prime(\theta_*) + \mu \epsilon^\prime(\theta_*)=0\quad \mbox{ in } \Omega,$$
 and similarly varying $\vartheta_\Gamma$ yields
$$\eta_\Gamma^\prime(\theta_{\Gamma*}) +\mu \epsilon_\Gamma^\prime(\theta_{\Gamma_*})=0\text{ on $\Gamma_*$}.$$
The relations $\eta(\theta)=-\psi^\prime(\theta)$ and
$\epsilon(\theta)=\psi(\theta)-\theta\psi^\prime(\theta)$ imply
$0=-\psi^{\prime\prime}(\theta_*)(1+\mu \theta_*)$, and this shows that
$\theta_*=-1/\mu$ is constant in $\Omega$, since
$\kappa(\theta)=-\theta\psi^{\prime\prime}(\theta)>0$ for all $\theta\in(0,\theta_c)$ by assumption.
Similarly on $\Gamma_*$ we obtain $\theta_{\Gamma_*}=-1/\mu$ constant as well, provided $\kappa_\Gamma(\theta_\Gamma)>0$, hence in particular $\theta_*\equiv \theta_{\Gamma_*}$.

Next varying $v$ implies $\mu u_*=0$, and hence $u_*=0$ as $\mu\neq 0$. 
Finally, varying $h$ we get
$$-([\![\rho\eta(\theta_*)]\!] +\eta_\Gamma(\theta_{\Gamma_*})H_\Gamma)-\lambda[\![\rho]\!]
-\mu([\![\rho\epsilon(\theta_*)]\!] + \epsilon_\Gamma(\theta_{\Gamma_*}) H(\Gamma_*))=0\text{ on $\Gamma_*$}.$$
This implies with the above relations
$$[\![\rho\psi(\theta_*)]\!]+\sigma(\theta_*) H(\Gamma_*)=\lambda[\![\rho]\!]\theta_{\Gamma_*}.$$
 Since $\theta_*$ is constant and assuming $\theta_*\in (0,\theta_c)$,  we see with $\sigma_*>0$
that $H(\Gamma_*)$ is constant.
Therefore, as $\Omega$ is bounded, $\Gamma_*$ is a sphere
whenever connected, and a union of finitely many disjoint spheres of
equal size otherwise. Thus the critical points of the entropy
functional for prescribed energy are precisely the equilibria of the
problem \eqref{bulk}, \eqref{interface}.

\bigskip

\noindent
{\bf(c)}\, Going further, suppose we have an equilibrium $e_*:=(0,\theta_*,\theta_{\Gamma_*},\Gamma_*)$
where the total entropy has a local maximum w.r.t.\ the constraints
${\sf M}={\sf M}_0$ and ${\sf E}={\sf E}_0$.
Then $\cD_*:=[\Phi+\lambda {\sf M}+\mu {\sf E}]^{\prime\prime}(e_*)$ is negative semi-definite on the kernel of
$({\sf M}^\prime,{\sf E}^\prime)(e_*)$,
where $\lambda$ and $\mu$  are the fixed Lagrange
multipliers found above. The kernel of ${\sf M}^\prime(e)$ is given by $(1|h)_\Gamma=0$, as $[\![\rho]\!]\neq0$, and that of ${\sf E}^\prime(e)$ is determined
by the identity
\begin{equation*}
(\rho u|v)_\Omega+(\rho\kappa(\theta)| \vartheta)_\Omega + (\kappa_\Gamma(\theta_\Gamma)|\vartheta_\Gamma)_\Gamma-([\![\rho\epsilon(\theta)]\!] + \epsilon_\Gamma(\theta_\Gamma) H(\Gamma)| h)_\Gamma=0,
\end{equation*}
which at equilibrium yields
\begin{equation}
\label{kE}
(\rho\kappa_\ast| \vartheta)_\Omega + (\kappa_{\Gamma_*}|\vartheta_\Gamma)_\Gamma=0,
\end{equation}
where $\kappa_\ast :=\kappa(\theta_\ast)$, $\kappa_{\Gamma_*}:=\kappa_\Gamma(\theta_*)$, and $\sigma_*=\sigma(\theta_{\Gamma_*})$.
On the other hand, a straightforward calculation yields with
$z=(v,\vartheta,\vartheta_\Gamma,h)$
\begin{align}
\label{2var}
-\langle \cD_* z|z\rangle &=(\rho v|v)_\Omega+\frac{1}{\theta_*}\big[(\rho\kappa_\ast \vartheta|\vartheta)_\Omega + (\kappa_{\Gamma*}\vartheta_\Gamma|\vartheta_\Gamma)_\Gamma
- \sigma_* \theta_*( H^\prime(\Gamma_*) h|h)_\Gamma\big].
\end{align}
As  $\kappa_\ast$ and $\kappa_{\Gamma_*}$ are positive, we see that the form
$\langle \cD z|z\rangle$ is negative semi-definite  as soon as
$H^\prime ({\Gamma_*}) $ is negative semi-definite.
 We have
\begin{equation*}
H^\prime(\Gamma_\ast) = (n-1)/{R^2_\ast} + \Delta_{\Gamma_\ast},
\end{equation*}
where $\Delta_{\Gamma_\ast}$ denotes the
Laplace-Beltrami operator on $\Gamma_\ast$ and $R_\ast$ means the radius of
an equilibrium sphere. To derive necessary conditions for an equilibrium $e_*$ to be a
local maximum of entropy, we
suppose that $\Gamma_\ast$ is not connected,
i.e. $\Gamma_\ast$ is a finite union of spheres $\Gamma^k_\ast$.
Set $\vartheta=\vartheta_\Gamma=0$, and let $h=h_k$ be
constant on $\Gamma^k_\ast$
with $\sum_k h_k=0$.
Then the constraint \eqref{kE} holds,
and with the volume $\omega_n$ of the unit sphere in $\R^n$
\begin{equation*}
\langle \cD z|z\rangle= (\sigma_*\theta_\ast)((n-1)/R^2_\ast)\omega_n R^{n-1}_* \,\sum_k h_k^2 >0,\
\end{equation*}
hence $\cD$ cannot be negative semi-definite in this case, as $\sigma_*>0$. Thus if $e_\ast$
is an equilibrium with locally maximal total entropy, then $\Gamma_\ast$ must be
connected, and hence both phases are connected.

On the other hand, if $\Gamma_*$ is connected then $H^\prime(\Gamma_*)$ is negative semi-definite
on functions with mean zero, hence in this case $\cD$ is in fact positive semi-definite.
We will see below that connectedness of $\Gamma_*$ is precisely the condition for stability of the equilibrium $e_*$.

\bigskip

\noindent
{\bf(d)}\, Summarizing, we have shown
\begin{itemize}
\item The total energy is constant along smooth solutions of \eqref{bulk},\eqref{interface}.
\item The negative total entropy is a strict Ljapunov functional for \eqref{bulk},\eqref{interface}.
\item The equilibria of \eqref{bulk},\eqref{interface} are precisely the critical points
of the entropy functional with prescribed total energy and total mass.
%\vspace{1mm}
\item
If the entropy functional with prescribed energy and total mass
has a local maximum at $e_*=(0,\theta_*,\theta_{\Gamma_*},\Gamma_*)$, then $\Gamma_*$ is connected.
\end{itemize}

\medskip

%These properties show that the model is consistent with thermodynamics and in equilibrium it behaves as physically expected.

\noindent
It should be noted that we are using the term {\em equilibrium} to describe a {\em stationary solution} of the system,
while {\em a thermodynamic equilibrium} would - by definition - require the entropy production to be zero.
Our results show that stable equilibria are precisely those that guarantee the system to be in a thermodynamic equilibrium.

%%%%%%%%%%%%%%%%%%%%%%%%%%%%%%%%%%%%%
\section{Local Well-posedness and the Semiflow} %2 pages
%%%%%%%%%%%%%%%%%%%%%%%%%%%%%%%%%%%%%
$\theta_\Gamma$ is not a real system variable as it is the trace of $\theta$ on the interface.
For analytical reasons it is a useful quantity while we consider the linear problem and the nonlinear problem
on the reference manifold $\Sigma$ after a Hanzawa transform.
Local well-posedness of Problem \eqref{bulk},\eqref{interface} is based on maximal $L_p$-regularity
of its principal linearization and on the contraction mapping principle.

\subsection{Principal Linearization}

The principal part of the linearized problem reads as follows
\begin{equation}
\hspace{-2mm}
%\left\{
\begin{aligned}
\label{linNS}
\rho\partial_t u -\mu_0(x) \Delta u +\nabla \pi&=\rho f_u &&\mbox{in }\; \Omega\setminus\Sigma,\\
{\rm div}\, u&=g_d && \mbox{in }\; \Omega\setminus\Sigma, \\
P_\Sigma[\![u]\!]+c(t,x)\nabla_\Sigma h &=P_\Sigma g_u  && \mbox{on } \Sigma,\\
\!\!-2[\![\mu_0(x) D(u) \nu_\Sigma]\!] \!+\! [\![\pi]\!]\nu_\Sigma\!-\!\sigma_0(x) \Delta_\Sigma h \nu_\Sigma
\!-\!\sigma_1(x) \nabla_\Sigma \theta_\Sigma  &= g  &&\mbox{on } \Sigma,\\
u&=0   &&\mbox{on } \partial\Omega,\\
u(0)&=u_0 &&\mbox{in }\; \Omega.
\end{aligned}
%\right.
\end{equation}
\begin{equation}
\hspace{2.5cm}
%\left\{
\begin{aligned}
\label{linHeat}
\rho\kappa(x)\partial_t \theta -d(x)\Delta\theta &=\rho\kappa(x)f_\theta && \mbox{in }\; \Omega\setminus\Sigma,\\
\kappa_{\Gamma0}(x)\partial_t \theta_\Sigma -d_{\Gamma0}(x)\Delta_\Sigma\theta_\Sigma
 &=\kappa_{\Gamma0}(x) g_\theta  &&\mbox{on } \Sigma,\\
[\![\theta]\!]=0,\quad \theta_\Sigma&=\theta|_{\Sigma}&& \mbox{on } \Sigma,\\
\partial_\nu \theta&=0 && \mbox{on } \partial\Omega, \\
\theta(0)=\theta_0\quad \mbox{in }\; \Omega,\qquad \theta_\Sigma(0)&=\theta_0|_\Sigma && \mbox{on } \Sigma,\\
\end{aligned}
%\right.
\end{equation}
\begin{equation}
\hspace{2cm}
%\left\{
\begin{aligned}
\label{linGTS}
[\![\rho]\!]\partial_t h-[\![\rho u\cdot\nu_\Sigma]\!]+b(t,x)\cdot\nabla_\Sigma h&= [\![\rho]\!]f_h &&\mbox{on } \Sigma,\\
-2[\![(\mu_0(x)/\rho) D(u) \nu_\Sigma\cdot\nu_\Sigma]\!]+[\![\pi/\rho]\!]&= g_h &&\mbox{on } \Sigma,\\
h(0)&=h_0 &&\mbox{on } \Sigma.
\end{aligned}
%\right.
\end{equation}
Here  $\mu_0,\kappa_0$, $d_0$, $\kappa_{\Gamma0}$, $d_{\Gamma0}$, $\sigma_0$ and $\sigma_1$ are functions of $x$,
which are realized by $\mu_0(x)=\mu(\theta_0(x))$ and so on. 
The difference between the linear problem for variable surface tension
\eqref{linNS} \eqref{linHeat} \eqref{linGTS} and the linear problem for constant surface tension \cite[Section 3.1]{PSW13} is
the 4th equation of \eqref{linNS} and the 2nd equation of \eqref{linHeat}.

Observe that \eqref{linHeat} decouples from the remaining problems \eqref{linNS}-\eqref{linGTS}. 
Maximal $L_p$-regularity of
\eqref{linHeat} has been proved in \cite{PSW11}, while
maximal $L_p$-regularity of \eqref{linNS} and \eqref{linGTS} has been considered in \cite{PSW13}.
Therefore we obtain the maximal $L_p$-regularity of \eqref{linNS},\eqref{linHeat},\eqref{linGTS}.

%%%%%%%%%%%%%%%%%%%%%%%%%%%%%%%%%%%%%%%%%%%%%%%%%%%%%
\begin{thm}
\label{th:3.1}
Let $p>n+2$, $\rho_i>0$, $\rho_2\neq\rho_1$, $\mu_{0i}, \kappa_{0i}, d_{0i} \in C(\bar{\Omega}_i)$,
$\kappa_{\Gamma0}$, $d_{\Gamma0}$, $\sigma_0$, $\sigma_1\in C(\Sigma)$,
$\mu_{0i}, \kappa_{0i}, d_{0i}>0$,
$\kappa_{\Gamma0}, d_{\Gamma0}, \sigma_0, \sigma_1>0$, $i=1,2$,
$$(b,c)\in W^{1-1/2p}_p(J;L_p(\Sigma))^{n+1}\cap L_p(J;W^{2-1/p}_p(\Sigma))^{n+1},$$ 
where $J=[0,a]$.
Then the coupled system \eqref{linNS}, \eqref{linHeat}, \eqref{linGTS}  admits a unique solution $(u,\pi,\theta,\theta_\Sigma, h)$ with regularity
\begin{equation*}
\label{reg}
\begin{split}
&(u,\theta)\in H^1_p(J;L_p(\Omega))^{n+1}
  \cap L_p(J;H^2_p(\Omega\setminus\Sigma))^{n+1}=: \EE_{u,\theta}(J), \\
& [\![u\cdot\nu_\Sigma]\!]\in H^1_p(J;\dot{W}^{-1/p}_p(\Sigma)),\quad
\pi\in L_p(J;\dot{H}^1_p(\Omega\setminus\Sigma))=: \EE_{\pi}(J), \\
&\theta_\Sigma\in H^1_p(J; W_p^{-1/p}(\Sigma))\cap L_p(J; W_p^{2-1/p}(\Sigma))=: \EE_{tr\theta}(J), \\
&\pi_i:=\pi_{|_{\partial\Omega_i}}\in W^{1/2-1/2p}_p(J;L_p(\Sigma))\cap L_p(J;W^{1-1/p}_p(\Sigma))=: \EE_{tr\pi}(J),\; i=1,2,\\
& h\in W^{2-1/2p}_p(J;L_p(\Sigma))\cap H^1_p(J;W^{2-1/p}_p(\Sigma))
\cap L_p(J;W^{3-1/p}_p(\Sigma))=: \EE_{h}(J),\end{split}
\end{equation*}
if and only if the data
$(f_u, f_\theta, g_\theta, g_d,P_\Sigma g_u, g, f_h, g_h,u_0, \theta_0, \theta_0|_\Sigma, h_0)$
satisfy the following regularity 
\begin{itemize}
\item[(a)]
$(f_u,f_\theta) \in L_p(J;L_p(\Omega))^{n+1}$,
\vspace{1mm}
\item[(b)]
$g_\theta \in L_p(J; W_p^{-1/p}(\Sigma))$,
\vspace{1mm}
\item[(c)]
$g_d\in H^1_p(J; \dot{H}^{-1}_p(\Omega))\cap L_p(J; H^1_p(\Omega\setminus\Sigma))$,
\vspace{1mm}
\item[(d)]
$(g,g_h)\in W^{1/2-1/2p}_p(J;L_p(\Sigma))^{n+1}
\cap L_p(J;W^{1-1/p}_p(\Sigma))^{n+1}$,
\vspace{1mm}
\item[(e)]
$(P_\Sigma g_u,f_h)\in W^{1-1/2p}_p(J;L_p(\Sigma))^{n}\cap L_p(J;W^{2-1/p}_p(\Sigma))^{n}$,
\vspace{1mm}
\item[(f)]
$(u_0,\theta_0,\theta_0|_\Sigma,h_0)\in X_\gamma:= W_p^{2-2/p}(\Omega\setminus\Sigma)^{n+1}\times W_p^{2-3/p}(\Sigma)
\times W_p^{3-2/p}(\Sigma)$,
\end{itemize}
and compatibility conditions:
\begin{itemize}
\item[(g)]
${\rm div}\, u_0=g_d(0)$ in $\,\Omega\setminus\Sigma$,
\vspace{1mm}
\item[(h)]
$P_\Sigma[\![u_0]\!]+ c(0,\cdot)\nabla_\Sigma h_0=P_\Sigma g_u(0)$ on $\Sigma$,
% if $p>3/2$,
\vspace{1mm}
\item[(i)]
$-P_\Sigma[\![\mu_0(\cdot)(\nabla u_0+[\nabla u_0]^{\sf T})\nu_\Sigma]\!] - \sigma_1(\cdot)\nabla_\Sigma\theta_\Sigma =P_\Sigma g(0)$ on
$\,\Sigma$. 
% if $p>3$.
\end{itemize}
The solution map 
$$[(f_u, f_\theta, g_\theta, g_d,P_\Sigma g_u, g, f_h, g_h, u_0, \theta_0, \theta_0|_\Sigma, h_0)
\mapsto (u,\pi,\theta, \theta_\Sigma, h)]$$ is continuous between the corresponding spaces.
\end{thm}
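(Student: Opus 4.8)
\medskip\noindent\textbf{Proof plan.}
The system \eqref{linNS}--\eqref{linGTS} has a \emph{triangular} structure, which I would exploit. The parabolic subsystem \eqref{linHeat} for $(\theta,\theta_\Sigma)$ does not involve $u$, $\pi$ or $h$, while conversely $(\theta,\theta_\Sigma)$ enters \eqref{linNS}--\eqref{linGTS} \emph{only} through the Marangoni term $\sigma_1(x)\nabla_\Sigma\theta_\Sigma$ on the right-hand side of the fourth equation of \eqref{linNS}; the coefficients $\mu_0,\sigma_0,\kappa_0,d_0$ are prescribed functions of $x$ and do not depend on the unknowns. The plan is therefore: (i) solve \eqref{linHeat} for $(\theta,\theta_\Sigma)$ via \cite{PSW11}; (ii) check that $\sigma_1\nabla_\Sigma\theta_\Sigma$ lands exactly in the data class required of $g$, so that it may be moved to the right-hand side; (iii) solve the then-decoupled system \eqref{linNS}, \eqref{linGTS} (with $g$ replaced by $g+\sigma_1\nabla_\Sigma\theta_\Sigma$), which up to a variable positive surface-tension coefficient is the linear two-phase Navier--Stokes/Gibbs--Thomson problem of \cite[Section~3.1]{PSW13}; and (iv) compose and read off uniqueness and continuity. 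The ``only if'' direction is standard trace theory.

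Step (i): under the hypotheses (a), (b), (f) and the compatibility conditions for the heat problem \eqref{linHeat} (which are those of \cite{PSW11}), \cite{PSW11} produces a unique $(\theta,\theta_\Sigma)$ with $\theta$ in the $\theta$-slot of $\EE_{u,\theta}(J)$ and $\theta_\Sigma\in\EE_{tr\theta}(J)$, continuously and linearly in the data. Step (ii): since $\nabla_\Sigma$ maps $W^s_p(\Sigma)$ into $W^{s-1}_p(\Sigma)$, from $\theta_\Sigma\in\EE_{tr\theta}(J)=H^1_p(J;W_p^{-1/p}(\Sigma))\cap L_p(J;W_p^{2-1/p}(\Sigma))$ one obtains $\nabla_\Sigma\theta_\Sigma\in H^1_p(J;W_p^{-1-1/p}(\Sigma))\cap L_p(J;W_p^{1-1/p}(\Sigma))$; the second factor is already the spatial part of class (d), and the anisotropic mixed-derivative embedding $H^1_p(J;A_0)\cap L_p(J;A_1)\hookrightarrow W^\eta_p(J;(A_0,A_1)_{1-\eta,p})$, evaluated at $\eta=1/2-1/2p$ with $A_0=W_p^{-1-1/p}(\Sigma)$, $A_1=W_p^{1-1/p}(\Sigma)$ (so that $(A_0,A_1)_{1-\eta,p}=L_p(\Sigma)$), supplies the temporal part $\nabla_\Sigma\theta_\Sigma\in W_p^{1/2-1/2p}(J;L_p(\Sigma))$. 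Multiplication by $\sigma_1$ preserves both factors: in the application $\sigma_1$ is the trace of $\sigma'(\theta_0)$, hence lies in $W_p^{2-3/p}(\Sigma)\hookrightarrow C^1(\Sigma)$ (using $p>n+2$) and is a pointwise multiplier on $W_p^{1-1/p}(\Sigma)$; only its boundedness and this multiplier property enter, so the hypothesis ``$\sigma_1\in C(\Sigma)$'' is to be read with this regularity. Hence $\sigma_1\nabla_\Sigma\theta_\Sigma$ satisfies (d); and since $\theta_\Sigma(0)=\theta_0|_\Sigma$, condition (i) of the theorem says precisely that $P_\Sigma(g+\sigma_1\nabla_\Sigma\theta_\Sigma)(0)=-2P_\Sigma[\![\mu_0 D(u_0)\nu_\Sigma]\!]$, which is the tangential-stress compatibility of the constant-surface-tension problem.

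Step (iii): put $\tilde g:=g+\sigma_1\nabla_\Sigma\theta_\Sigma$, which by (i)--(ii) satisfies (d) and, at $t=0$, the identity just displayed. Replacing $g$ by $\tilde g$, the system \eqref{linNS}, \eqref{linGTS} no longer contains $(\theta,\theta_\Sigma)$ and coincides with the linear problem of \cite[Section~3.1]{PSW13}, except that $\sigma_0$ is a positive element of $C(\Sigma)$ rather than a constant; the first-order coefficients $b,c$ with the stated space-time regularity are admissible lower-order perturbations exactly as there. The passage from constant to variable positive $\sigma_0$ is routine: a finite partition of unity on $\Sigma$, freezing $\sigma_0$ at the patch centers, and absorbing the differences $\sigma_0(x)-\sigma_0(x_k)$ — small on small patches by uniform continuity — together with the commutator terms by a Neumann series. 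This yields a unique $(u,\pi,h)$ of the claimed regularity, continuously and linearly in $(f_u,g_d,P_\Sigma g_u,\tilde g,f_h,g_h,u_0,h_0)$, under (a), (c), (d), (e), (f), (g), (h), (i). Step (iv): assembling, the full solution operator is the composition of the bounded linear maps from \cite{PSW11}, $\theta_\Sigma\mapsto g+\sigma_1\nabla_\Sigma\theta_\Sigma$, and from \cite{PSW13}, hence bounded and linear; uniqueness follows since homogeneous data force $(\theta,\theta_\Sigma)=0$, then $\tilde g=0$, then $(u,\pi,h)=0$. Necessity of (a)--(i) is the routine converse via the trace theorems for anisotropic $L_p$-Sobolev--Slobodeckij spaces and evaluation of the interface equations at $t=0$.

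The only genuinely substantive point is step (ii): the \emph{sharp} regularity transfer — that $\nabla_\Sigma\theta_\Sigma$ neither loses nor gains regularity relative to the data class of $g$ — so that the problem reduces \emph{cleanly} to \cite{PSW13}, together with the observation that this reduction carries compatibility condition (i) over to that of the constant-$\sigma$ problem. Once this is secured, \cite{PSW11} and \cite{PSW13} supply the rest, and the triangular structure makes the coupled estimate immediate.
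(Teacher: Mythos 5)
Your proposal follows essentially the same route as the paper: the paper's proof consists precisely of observing that \eqref{linHeat} decouples from \eqref{linNS}--\eqref{linGTS}, invoking the maximal $L_p$-regularity result of \cite{PSW11} for the heat subsystem and that of \cite{PSW13} for the Stokes/Gibbs--Thomson subsystem with the Marangoni term $\sigma_1\nabla_\Sigma\theta_\Sigma$ treated as given data. Your additional details (the mixed-derivative embedding showing $\sigma_1\nabla_\Sigma\theta_\Sigma$ lands exactly in class (d), the transfer of compatibility condition (i), and the localization/perturbation for variable $\sigma_0$) correctly fill in what the paper leaves implicit, so the argument is sound and matches the paper's approach.
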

\begin{rem} $X_\gamma$ is the time trace space of the solution space $\EE(J)$ with
\vspace{-0.7mm}
$$
\EE(J):=\EE_{u}(J)\times\EE_{\theta}(J)\times \EE_{tr\theta}(J)\times \EE_{h}(J).
$$
\end{rem}
%%%%%%%%%%%%%%%%%%%%%%%%%%%%%%%%%%%%%%%%%%%%%%%%%%%%%%%%%
\medskip
\subsection{ Local Existence}

As in \cite{PSW13}, the basic result for local well-posedness of Problem \eqref{bulk},\eqref{interface}
in an $L_p$-setting is the following theorem, which is proved by the contraction mapping principle.

\bigskip

\begin{thm} \label{wellposed} Let $p>n+2$, $\rho_1,\rho_2>0$, $\rho_1\neq\rho_2$.
We assume the conditions {\rm{\bf a), b), c)}} in Section 1 and the regularity conditions
$$
(u_0,\theta_0)\in W^{2-2/p}_p(\Omega\setminus\Gamma_0)^{n+1},
\quad\Gamma_0\in W^{3-2/p}_p.
$$
Then there exists a unique $L_p$-solution of Problem \eqref{bulk},\eqref{interface}  on
some possibly small but nontrivial time interval $J=[0,\tau]$.
\end{thm}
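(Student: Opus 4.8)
The plan is to follow the by-now standard strategy for such free-boundary problems: transform to a fixed domain via a Hanzawa transformation, write the transformed system as a perturbation of the principal linearization \eqref{linNS}--\eqref{linGTS}, and solve the resulting fixed-point problem by the contraction mapping principle using the maximal $L_p$-regularity supplied by Theorem~\ref{th:3.1}. First I would parametrize the unknown interface $\Gamma(t)$ over a fixed real-analytic reference manifold $\Sigma$ (close to $\Gamma_0$) as a graph in normal direction with height function $h(t,\cdot)$, so that $\Gamma(t) = \{x + h(t,x)\nu_\Sigma(x): x\in\Sigma\}$, and extend this to a diffeomorphism $\Theta_h$ of $\bar\Omega$ fixing $\partial\Omega$ (localized near $\Sigma$ via a cutoff). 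Pulling back $(u,\pi,\theta)$ by $\Theta_h$ turns the moving-domain system \eqref{bulk},\eqref{interface} into a quasilinear system on the fixed domain $\Omega\setminus\Sigma$ with the interface pinned at $\Sigma$; the principal part at $h=0$, $(u,\theta)=(u_0,\theta_0)$, is exactly \eqref{linNS},\eqref{linHeat},\eqref{linGTS}, and all genuinely nonlinear terms (transformed convection, curvature, the Marangoni term $\sigma'(\theta_\Sigma)\nabla_\Sigma\theta_\Sigma$, the kinetic-undercooling term $\gamma(\theta_\Sigma)j_\Gamma$, the quadratic dissipation $2\mu|D(u)|^2$, the transformed surface divergence and heat operators, the $h$-dependence of all coefficients $\mu(\theta),d(\theta),\kappa_\Gamma,d_\Gamma,\gamma$, and the pulled-back differential operators) are collected into data maps $f_u,f_\theta,g_\theta,g_d,g_u,g,f_h,g_h$ depending on $(u,\pi,\theta,\theta_\Sigma,h)$, as well as the terms $b(t,x)\cdot\nabla_\Sigma h$ and $c(t,x)\nabla_\Sigma h$ whose coefficients depend on $h$ itself.

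The second step is the mapping properties. Working in the solution space $\EE(J)$ of Theorem~\ref{th:3.1}, I would verify that each data map sends $\EE(J)$ (restricted to a ball around a fixed reference extension of the initial data) into the corresponding regularity class (a)--(f); the choice $p>n+2$ is precisely what makes the embeddings $\EE_{u,\theta}(J)\hookrightarrow BUC(J;C^1(\bar\Omega\setminus\Sigma))$ and $\EE_h(J)\hookrightarrow BUC(J;C^2(\Sigma))$ available, so that products and compositions with the $C^2$/$C^3$ nonlinearities ($\mu,d,d_\Gamma,\gamma\in C^2$, $\psi,\sigma\in C^3$ by assumption~\textbf{a}) stay in $L_p$ and in the fractional-order trace spaces. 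Here assumption~\textbf{b} guarantees the coefficients stay in their admissible (positive) ranges and that $0<\theta<\theta_c$ is preserved on a short interval by continuity from $0<\theta_0<\theta_c$; assumption~\textbf{c} is exactly the compatibility needed so that the reference data satisfy (g)--(i) at $t=0$ (with $g_d(0)=0$, matching $\mathrm{div}\,u_0=0$, etc.), which is what allows the nonlinear data to vanish to the right order at $t=0$. The crucial quantitative point, standard but technical, is that on a small time interval $J=[0,\tau]$ the nonlinear maps are not only well-defined but have small Lipschitz constants on the ball: one extracts the smallness from (i) the smallness of $h$ (hence of the perturbation of all operators and of $b,c$) enforced by the fixed-point ball together with $h(0)=0$ in the reduced variable, (ii) the fact that quadratic and superlinear terms gain a positive power of $\tau$ thanks to vanishing traces at $t=0$, and (iii) continuity of the solution operator of Theorem~\ref{th:3.1} in the operator norm with respect to the coefficients $b,c$ on short intervals.

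The third step is to close the argument: define $K(u,\pi,\theta,\theta_\Sigma,h)$ to be the solution of \eqref{linNS},\eqref{linHeat},\eqref{linGTS} with the nonlinear data inserted on the right-hand side, provided by Theorem~\ref{th:3.1}; then for $\tau$ small and the radius of the ball small, $K$ is a self-map and a contraction on the complete metric space obtained from a closed ball in $\EE(J)$, and the Banach fixed-point theorem yields a unique local solution. Transforming back via $\Theta_h^{-1}$ produces the unique $L_p$-solution of \eqref{bulk},\eqref{interface} on $[0,\tau]$, with $\Gamma(t)$ recovered from $h$; well-posedness (continuous dependence on $(u_0,\theta_0,\Gamma_0)$) follows from the continuity of the solution map in Theorem~\ref{th:3.1} plus the Lipschitz estimates.

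The main obstacle, and the only place where the present paper differs substantially from \cite{PSW13}, is handling the two new structural features: the Marangoni term $\sigma'(\theta_\Sigma)\nabla_\Sigma\theta_\Sigma$ in the jump condition \eqref{linNS}$_4$ and the surface temperature equation \eqref{linHeat}$_2$ with its own parabolic smoothing. One must check that the trace $\theta_\Sigma$, which now carries its \emph{own} dynamics in $\EE_{tr\theta}(J)$ rather than being merely a trace, has enough regularity ($W^{2-1/p}_p(\Sigma)$ in space) for $\nabla_\Sigma\theta_\Sigma$ to live in the tangential-surface-stress data class (d), i.e.\ in $W^{1/2-1/2p}_p(J;L_p(\Sigma))\cap L_p(J;W^{1-1/p}_p(\Sigma))$, and that the compatibility (i) involving $\sigma_1\nabla_\Sigma\theta_\Sigma$ is consistent with assumption~\textbf{c}. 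The regularity-theoretic input for this — maximal $L_p$-regularity of the \emph{coupled} bulk/surface heat problem \eqref{linHeat} — is cited from \cite{PSW11}, and maximal regularity of the Stokes part \eqref{linNS},\eqref{linGTS} from \cite{PSW13}; since \eqref{linHeat} decouples at the linear level, Theorem~\ref{th:3.1} is assembled from these, so at the nonlinear stage the remaining work is the (routine but lengthy) bookkeeping that the new nonlinear terms respect all the function-space constraints with the requisite smallness in $\tau$.
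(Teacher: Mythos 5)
Your proposal matches the paper's argument: the paper proves Theorem \ref{wellposed} exactly by a Hanzawa transform to a fixed reference manifold, treating the transformed nonlinearities as right-hand sides for the principal linearization \eqref{linNS}--\eqref{linGTS}, and applying the contraction mapping principle via the maximal $L_p$-regularity of Theorem \ref{th:3.1} (heat part from \cite{PSW11}, Stokes part from \cite{PSW13}), just as in \cite{PSW13}. Your identification of the genuinely new items — the Marangoni term $\sigma'(\theta_\Sigma)\nabla_\Sigma\theta_\Sigma$ in the stress jump and the dynamic surface heat equation for $\theta_\Sigma$ — is precisely where the paper notes the difference from the constant-$\sigma$ case.
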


\bigskip

\noindent
Here the notation $\Gamma_0\in W^{3-2/p}_p$ means that $\Gamma_0$ is a $C^2$-manifold, such that
its (outer) normal field $\nu_{\Gamma_0}$ is of class $W^{2-2/p}_p(\Gamma_0)$. Therefore, the Weingarten tensor
 $L_{\Gamma_0}=-\nabla_{\Gamma_0}\nu_{\Gamma_0}$ of $\Gamma_0$ belongs to $W^{1-2/p}_p(\Gamma_0)$ which embeds into
$C^{\alpha+1/p}(\Gamma_0)$, with $\alpha=1-(n+2)/p>0$ since $p>n+2$ by assumption.
For the same reason we also have $u_0\in C^{1+\alpha}(\bar{\Omega}_i(0))^n$,
$\theta_0\in C^{1+\alpha}(\bar{\Omega}_i(0))$, $i=1,2$,
and $V_0\in C^{1+\alpha}(\Gamma_0)$.
The notion $L_p$-solution means that $(u,\pi,\theta,\theta_\Gamma, \Gamma)$ is obtained as the push-forward
of an $L_p$-solution 
$(\bar{u},\bar{\pi},\bar{\theta}, \bar{\theta}_\Sigma, h)$ of the transformed problem,
which means that $(\bar{u},\bar{\theta},\bar{\theta}_\Gamma, h)$ belongs to
$\EE(J)$. The regularity of the pressure is obtained from the equations.
%\bigskip
%%%%%%%%%%%%%%%%%%%%%%%%%%%%%%%%%%%%%%%%%%%%%%%%%%%%%%%%%
\subsection{Time-Weights}\label{subsec:timeweights}

For later use we need an extension of the local existence result to spaces with time weights. For this purpose, given a UMD-Banach space $Y$
and $\mu\in(1/p,1]$, we define for $J=(0,t_0)$
$$K^s_{p,\mu}(J;Y):=\{u\in L_{p,loc}(J;Y): \; t^{1-\mu}u\in K^s_p(J;Y)\},$$
where $s\geq0$ and $K\in\{H,W\}$. It has been shown in \cite{PrSi04} that the operator $d/dt$ in $L_{p,\mu}(J;Y)$ with domain
$$\cD(d/dt)={_0H}^1_{p,\mu}(J;Y)=\{u\in H^1_{p,\mu}(J;Y):\; u(0)=0\}$$
is sectorial and admits an $H^\infty$-calculus with angle $\pi/2$.
This is the main tool to extend Theorem \ref{wellposed} to the time weighted setting, where the solution space $\EE(J)$ is replaced by
$\EE_\mu(J)$, and
$$
z\in \EE_\mu(J) \Leftrightarrow t^{1-\mu}z\in \EE(J).
$$
The trace spaces for $(u,\theta,h)$ for $p>3$ are then given by
\begin{align}\label{tracesp-mu}
&(u_0,\theta_0)\in W^{2\mu-2/p}_p(\Omega\setminus\Sigma)^{n+1},\;\theta_0|_\Sigma\in W^{2\mu-3/p}_p(\Sigma),
\;  h_0\in W^{2+\mu-2/p}_p(\Sigma),\nonumber\\
& h_1:=\partial_th{|_{t=0}}\in W^{2\mu-3/p}_p(\Sigma),
\end{align}
where for the last trace  we need in addition $\mu>3/2p$. Note that the embeddings
$$
\EE_{\mu,u,\theta}(J)\hookrightarrow  C(J;C^1(\bar{\Omega}_i))^{n+1},
\quad\EE_{\mu,h}(J)\hookrightarrow C(J;C^{2+\alpha}(\Sigma))\cap C^1(J; C^1(\Sigma))
$$
with $\alpha=1/2-n/p>0$ require $\mu>1/2+(n+2)/2p$, which is feasible since $p>n+2$ by assumption.
This restriction is needed for the estimation of the nonlinearities.

For these time weighted spaces we have the following result.

\begin{cor} \label{wellposed3} Let $p>n+2$, $\mu\in (1/2+(n+2)/2p,1]$, $\rho_1,\rho_2>0$, $\rho_1\neq\rho_2$.
We assume that the conditions {\rm{\bf a), b), c)}} in Section 1 and the regularity conditions
$$
(u_0,\theta_0)\in W^{2\mu-2/p}_p(\Omega\setminus\Gamma_0)^{n+1},
\quad\Gamma_0\in W^{2+\mu-2/p}_p
$$
are satisfied. Then the transformed problem admits a unique solution 
$$z=(u,\theta,\theta_\Sigma, h)\in \EE_\mu(0,\tau)$$
for some nontrivial time interval $J=[0,\tau]$. The solution depends continuously on the data.
For each $\delta>0$ the solution belongs to $\EE(\delta,\tau)$, i.e.\ it regularizes instantly.
\end{cor}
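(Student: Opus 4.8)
The plan is to deduce Corollary~\ref{wellposed3} from Theorem~\ref{wellposed} by transferring the entire local existence machinery to the time-weighted spaces $\EE_\mu(J)$, exactly as in the constant-surface-tension case treated in \cite{PSW13}, using the $H^\infty$-calculus result of \cite{PrSi04} quoted above. The main structural point is that the principal linearization \eqref{linNS}, \eqref{linHeat}, \eqref{linGTS} enjoys maximal $L_{p,\mu}$-regularity, not merely maximal $L_p$-regularity. This follows from Theorem~\ref{th:3.1} together with the abstract transference principle: since $d/dt$ on $L_{p,\mu}(J;Y)$ is sectorial with an $H^\infty$-calculus of angle $\pi/2$ whenever $Y$ is UMD, any operator that has maximal $L_p$-regularity and commutes appropriately with multiplication by powers of $t$ also has maximal $L_{p,\mu}$-regularity for $\mu\in(1/p,1]$; concretely one conjugates the solution operator of Theorem~\ref{th:3.1} by $z\mapsto t^{1-\mu}z$ and checks that the resulting operator is bounded on $\EE_\mu(J)$, with the trace space $X_\gamma$ replaced by $X_{\gamma,\mu}$ as recorded in \eqref{tracesp-mu}. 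The coefficient assumptions on $(b,c)$ in Theorem~\ref{th:3.1} are met because $b,c$ arise from the reference data $(u_0,\theta_0,\Gamma_0)$ and, under the weaker hypotheses of the corollary, still lie in the required regularity class once one works with the $\mu$-dependent trace spaces.

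Granting maximal $L_{p,\mu}$-regularity of the linearization, I would set up the fixed point exactly as for Theorem~\ref{wellposed}: perform the Hanzawa transform to pull the problem back to the fixed reference manifold $\Sigma$, split the transformed system into its principal linear part plus a nonlinear remainder $N(z)$ collecting the lower-order and quadratic terms, and solve $Lz = N(z)$, $z|_{t=0} = z_0$, by the contraction mapping principle on a ball in $\EE_\mu(0,\tau)$ with $\tau$ small. The key estimates are that $N$ maps $\EE_\mu(0,\tau)$ into the data space with a constant that tends to $0$ as $\tau\to0$ on balls around the (extended) initial value, and that $N$ is Lipschitz there with small constant. These are precisely the Sobolev multiplication and trace estimates already carried out in \cite{PSW13}; the only genuinely new nonlinear terms are the Marangoni term $\sigma'(\theta_\Sigma)\nabla_\Sigma\theta_\Sigma$ in the fourth equation of \eqref{linNS}, the kinetic undercooling term $\gamma(\theta_\Sigma)j_\Gamma$ in the Gibbs--Thomson relation, and the surface-energy contributions in the interfacial heat balance. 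Because $\EE_{\mu,u,\theta}(J)\hookrightarrow C(J;C^1(\bar\Omega_i))^{n+1}$ and $\EE_{\mu,h}(J)\hookrightarrow C(J;C^{2+\alpha}(\Sigma))\cap C^1(J;C^1(\Sigma))$ under the restriction $\mu>1/2+(n+2)/2p$, these new terms are controlled by the same embedding-plus-Banach-algebra arguments; the time weight costs nothing essential since $t^{1-\mu}$ is bounded on $(0,\tau)$ and improves small-time behaviour. Uniqueness and continuous dependence follow from the contraction estimate in the standard way.

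The instantaneous regularization claim, $z\in\EE(\delta,\tau)$ for every $\delta>0$, is obtained by the usual bootstrap: the solution $z$ restricted to $[\delta/2,\tau]$ has, at time $\delta/2$, an initial value $z(\delta/2)$ lying in the unweighted trace space $X_\gamma$ (one gains this because membership in $\EE_\mu$ on $(0,\tau)$ forces $z(t)\in X_\gamma$ for $t>0$, as $t^{1-\mu}$ is bounded away from $0$ on $[\delta/2,\tau]$ and the relevant real-interpolation embeddings improve), so one re-solves the problem forward from $t=\delta/2$ in the unweighted class $\EE([\delta/2,\tau'])$ via Theorem~\ref{wellposed} and invokes uniqueness to identify the two solutions on the overlap; letting $\delta\to0$ gives the statement. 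The main obstacle I anticipate is the careful verification that the coefficient functions $(b,c)$ of the linearization — which depend on $\theta_0|_\Sigma$ and on the geometry of $\Gamma_0$ through the Hanzawa parametrization — still satisfy the hypothesis of Theorem~\ref{th:3.1} when $\Gamma_0$ is only assumed to be of class $W^{2+\mu-2/p}_p$ rather than $W^{3-2/p}_p$; one must check that the lost derivative is absorbed by the time weight, i.e.\ that the lower initial regularity is compatible with the $L_{p,\mu}$-trace theory, and that the Marangoni coupling does not destroy the triangular/decoupled structure of \eqref{linHeat} that was used to reduce to known maximal-regularity results. Once that bookkeeping is done, everything else is a routine adaptation of \cite{PSW13}.
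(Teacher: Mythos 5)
Your proposal follows essentially the same route as the paper: maximal regularity of the principal linearization is transferred to the time-weighted class via the $H^\infty$-calculus of $d/dt$ from \cite{PrSi04}, the contraction argument of Theorem~\ref{wellposed} is repeated in $\EE_\mu(0,\tau)$ using the embeddings that hold precisely for $\mu>1/2+(n+2)/2p$, and the instant regularization comes from the weight structure. The only remark is that the last step is simpler than your bootstrap: since $z\in\EE_\mu(0,\tau)$ means $t^{1-\mu}z\in\EE(0,\tau)$ and $t^{-(1-\mu)}$ is smooth and bounded on $[\delta,\tau]$, the inclusion $z\in\EE(\delta,\tau)$ is immediate, with no need to re-solve from $t=\delta/2$.
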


%%%%%%%%%%%%%%%%%%%%%%%%%%%%%%%%%%%%%%%%%%%%%%%%%%%%%%%%
\section{Linear Stability of Equilibria} 
%%%%%%%%%%%%%%%%%%%%%%%%%%%%%%%%%%%%%%%%%%%%%%%%%%%%% j %
{\bf 1.} \, We call an equilibrium {\em non-degenerate} if the balls making up $\Omega_1(t)$
do neither touch each other nor the outer boundary;
this set is denoted by $\cE$. To derive the full linearization at a non-degenerate equilibrium
$e_*:=(0,\theta_*,\theta_{\Sigma*}, \Sigma)\in\cE$,  note that the quadratic terms $u\cdot\nabla u$, $u\cdot\nabla \theta$,
$|D(u)|_2^2$, $[\![u]\!]j_\Gamma$, and $j_\Gamma^2$ 
give no contribution to the linearization. Therefore we obtain the following fully linearized problem
for $(u, \pi, h)$, the relative temperature $\vartheta=(\theta-\theta_*)/\theta_*$
and $\vartheta_\Sigma=\vartheta|_\Sigma$.
%%%%%%%%%%%%%%%
\begin{equation}
%\left\{
\begin{aligned}
\label{elin-u}
\rho\partial_t u-\mu_* \Delta u +\nabla\pi &=\rho f_u\quad &&\mbox{in }\;\Omega\setminus\Sigma,\\
{\rm div}\, u &=g_d &&\mbox{in }\;\Omega\setminus\Sigma,\\
P_\Sigma[\![u]\!]&=P_\Sigma g_u &&\mbox{on } \Sigma,\\
 -2P_\Sigma [\![\mu_*D(u)\nu_\Sigma]\!] - \theta_*{\sigma^\prime_*}\nabla_\Sigma \vartheta_\Sigma
 &=P_\Sigma g && \mbox{on } \Sigma,\\
 -2[\![\mu_*D(u) \nu_\Sigma\cdot\nu_\Sigma]\!] +\lj\pi\rj +\sigma_* \cA_\Sigma h
-\theta_*{\sigma^\prime_*}H_*\vartheta_\Sigma
 &=g\cdot \nu_\Sigma && \mbox{on } \Sigma,\\
u&=0\quad &&\mbox{on } \partial\Omega,\\
u&=u_0\quad &&\mbox{in }\; \Omega,
\end{aligned}
%\right.
\end{equation}
\\
%%%%%%%%%%%%%%%%%%%%
\begin{equation}
\hspace{0mm}
%\left\{
\begin{aligned}
\label{elin-theta}
\rho\kappa_*\partial_t\vartheta -d_*\Delta \vartheta &=\rho\kappa_*f_\theta &&\mbox{in }\;
\Omega\setminus\Sigma, \hspace{-5mm}\\
[\![\vartheta]\!]&=0   &&\mbox{on } \Sigma,\\
\!\kappa_{\Gamma_*}\pd_t\vartheta_\Sigma \!-\!d_{\Gamma_*}\Delta_\Sigma\vartheta_\Sigma
\!-\!(l_*/\theta_*)j_\Sigma \!-\![\![d_*\partial_{\nu_\Sigma}\vartheta]\!]\!-\! {\sigma'_*} \text{div}_\Sigma u_\Sigma
&= \kappa_{\Gamma_*} g_\theta  &&\mbox{on } \Sigma, \\
\partial_\nu\vartheta&=0 &&\mbox{on } \partial\Omega,\\
\vartheta &=\vartheta_0  &&\mbox{in }\; \Omega,
\end{aligned}
%\right.
\end{equation}
%The remaining conditions on the equilibrium interface $\Sigma$ are
\\
%%%%%%%%%%
\begin{equation}
\hspace{1mm}
%\left\{
\begin{aligned}
\label{elin-h}
-2[\![\mu_*D(u)\nu_\Sigma\cdot\nu_\Sigma/\rho]\!] +\lj \pi/\rho\rj
+ l_*\vartheta_\Sigma +\gamma_* j_\Sigma &= g_h && \mbox{on } \Sigma,\\
\partial_t h -[\![\rho u\cdot\nu_\Sigma]\!]/[\![\rho]\!] &= f_h && \mbox{on }\Sigma,\\
h(0)&=h_0 &&\mbox{on }\Sigma,
\end{aligned}
%\right.
\end{equation}
where $\mu_*=\mu(\theta_*)$, $\kappa_*=\kappa(\theta_*)$, $d_*=d(\theta_*)$,
$\sigma_*=\sigma(\theta_*)$, $l_*=l(\theta_*)$, $\gamma_*=\gamma(\theta_*)$,
$l_{\Gamma_*}=l_\Gamma(\theta_*)$,
$\kappa_{\Gamma_*}=\kappa_\Gamma(\theta_*)$, $d_{\Gamma_*}=d_\Gamma(\theta_*)$, and
$$\cA_\Sigma =-H^\prime(0)=-(n-1)/R_*^2 -\Delta_\Sigma,\quad H_*=(n-1)/R_*.$$
Here we used that $l_{\Gamma_*}/\theta_*={\sigma^\prime_*}=\sigma'(\theta_*)$.
Finally, $u_\Sigma$ denotes the transformed velocity field $u_\Gamma$, and
$j_\Sigma$ is given by
$$
j_\Sigma:=[\![u\cdot\nu_\Sigma]\!]/[\![1/\rho]\!].
$$
The time-trace space $\EE_\gamma$ of $\EE(J)$ is given by
$$
(u_0,\vartheta_0,\vartheta_0|_\Sigma,h_0)\in\EE_\gamma
= [W^{2-2/p}_p(\Omega\setminus\Sigma)]^{n+1}\times W^{2-3/p}_p(\Sigma)
\times W^{3-2/p}_p(\Sigma),
$$
and the space of right hand sides is
\begin{align*}
((f_u,f_\theta),g_d, (f_h,P_\Sigma g_u),&(g,g_\theta,g_h)) \in \FF(J)\\
& := \FF_{u,\theta}(J)\times \FF_d(J)\times \FF_h(J)^{n+1}\times\FF_\theta(J)^{n+2},
\end{align*}
where
$$\FF_{u,\theta}(J)=L_p(J\times\Omega)^{n+1},\quad \FF_d(J)=H^1_p(J;\dot{H}^{-1}_p(\Omega))\cap L_p(J;H^1_p(\Omega)),$$
and
$$\FF_\theta(J)=W^{1/2-1/2p}_p(J;L_p(\Sigma))\cap L_p(J;W^{1-1/p}_p(\Sigma)),$$
$$\FF_h(J)=W^{1-1/2p}_p(J;L_p(\Sigma))\cap L_p(J;W^{2-1/p}_p(\Sigma)).$$
As the terms $(l_*/\theta_*)j_\Sigma$ and ${\sigma^\prime_*}{\rm div}_\Sigma u_\Sigma$ are lower order, the remaining system
is triangular, where the equations for $\theta$ decouple. Therefore, as in Section 3, it follows
from the maximal regularity results
in \cite{DPZ08,PrSh12,PSW13} and a standard perturbation argument that the operator $\LL$ defined by the left hand side of
\eqref{elin-u}, \eqref{elin-theta}, \eqref{elin-h} is an isomorphism from $\EE$ into $\FF\times\EE_\gamma$.
The range of $\LL$ is determined by the natural compatibility conditions.
If the time derivatives $\partial_t$ are replaced by $\partial_t +\omega$, $\omega>0$
sufficiently large, then this result is also true for $J=\R_+$.

\medskip

\noindent
{\bf 2.}\, We introduce a functional analytic setting as follows. Set
$$
X_0=L_{p,\sigma}(\Omega)\times L_p(\Omega)\times W^{-1/p}_p(\Sigma)\times W^{2-1/p}_p(\Sigma),
$$
where the subscript $\sigma$ means solenoidal, and define the operator $L$ by
\begin{align*}
&L(u, \vartheta,\vartheta_\Sigma, h)=\\
&
\big(-(\mu_*/\rho)\Delta u +\nabla\pi/\rho, -(d_*/\rho\kappa_*)\Delta \vartheta, \\
&-(1/{\kappa_{\Gamma_*}})( d_{\Gamma_*}\Delta_\Sigma\vartheta_\Sigma
+(l_*/\theta_*)j_\Sigma + [\![d_*\partial_{\nu_\Sigma}\vartheta]\!] + {\sigma^\prime_*}\text{div}_\Sigma u_\Sigma),
-[\![\rho u\cdot\nu_\Sigma]\!]/[\![\rho]\!]\big).
\end{align*}
To define the domain $\cD(L)$ of $L$, we set
\begin{align*}
X_1= \{& (u,\vartheta,\vartheta_\Sigma, h)\in H^2_p(\Omega\setminus\Sigma)^{n+1}\times W^{2-1/p}_p(\Sigma)
\times W^{3-1/p}_p(\Sigma): \\
& {\rm div}\, u=0\; \mbox{ in }\; \Omega\setminus\Sigma,\;
P_\Sigma[\![u]\!]=0,\;[\![\vartheta]\!]=0\; \mbox{ on } \; \Sigma, \\
&u=0,\;\partial_\nu\vartheta=0\;\mbox{ on }\;\partial\Omega\},
\end{align*}
and
\begin{align*}
\cD(L)= \{(u,\vartheta,\vartheta_\Sigma, h)\in X_1:  2 P_\Sigma[\![\mu_*D(u)\nu_\Sigma]\!]+
\theta_*{\sigma^\prime_*}\nabla_\Sigma \vartheta_\Sigma=0\; \mbox{ on } \, \Sigma\}.
\end{align*}
$\pi$ is determined as the solution of the weak transmission problem
\begin{eqnarray*}
&&(\nabla\pi|\nabla\phi/\rho)_2=((\mu_*/\rho)\Delta u|\nabla \phi)_2,\quad
\phi\in {H}^1_{p^\prime}(\Omega),\; \phi=0 \mbox{ on } \Sigma,\\
&& [\![\pi]\!]=-\sigma_* \cA_\Sigma h+\theta_*{\sigma^\prime_*}\vartheta_\Sigma H_*
+ 2[\![\mu_* (D(u)\nu_\Sigma|\nu_\Sigma)]\!],\quad \mbox{ on } \Sigma,\\
&& [\![\pi/\rho]\!]= 2[\![(\mu_*/\rho)(D(u)\nu_\Sigma|\nu_\Sigma)]\!]-l_*\vartheta
-\gamma_* [\![u\cdot\nu_\Sigma]\!]/[\![1/\rho]\!] \quad \mbox{ on } \Sigma.
\end{eqnarray*}
Let us introduce solution operators $T_k$, $k\in\{1,2,3\}$, as follows
\begin{align*}
\frac{1}{\rho}\nabla\pi
&=T_1((\mu_*/\rho)\Delta u)+T_2(-\sigma_* \cA_\Sigma h+\theta_*{\sigma^\prime_*}\vartheta_\Sigma H_*
+2[\![\mu_* (D(u)\nu_\Sigma|\nu_\Sigma)]\!])\\
&+T_3(2[\![(\mu_*/\rho)(D(u)\nu_\Sigma|\nu_\Sigma)]\!]-l_*\vartheta-\gamma_* [\![u\cdot\nu_\Sigma]\!]/[\![1/\rho]\!]).
\end{align*}
We refer to K\"ohne, Pr\"uss and Wilke \cite{KPW10} for the analysis of such transmission problems.
The linearized problem can be rewritten as an abstract evolution problem in $X_0$.
\begin{equation}\label{alp} \dot{z} + Lz =f,\quad t>0,\quad z(0)=z_0,\end{equation}
where $z=(u,\vartheta, \vartheta_\Sigma, h)$, $f=(f_u,f_\theta, f_\Sigma, f_h)$,
$z_0=(u_0,\vartheta_0, \vartheta_0|_\Sigma, h_0)$, provided $(g_d,g_u,g,g_\theta,g_h)=0$.
The linearized problem has maximal $L_p$-regularity, hence (\ref{alp}) has this property as well.
Therefore, by a well-known result, $-L$ generates an analytic $C_0$-semigroup in $X_0$;
see for instance  Proposition 1.1 in \cite{Pru03}. 

Since the embedding $X_1\hookrightarrow X_0$ is compact, the semigroup $e^{-Lt}$ as well as the 
resolvent $(\lambda+L)^{-1}$ of $-L$ are compact as well.
Therefore, the spectrum $\sigma(L)$ of $L$ consists of countably many eigenvalues of finite algebraic multiplicity, 
and it is independent of $p$.

\medskip

\noindent
{\bf 3.} \, %We concentrate now on the case $l_*\neq0$.
Suppose that $\lambda$ with ${\rm Re}\; \lambda\geq0$ is an eigenvalue of $-L$.
This means
%%%%%%%%%
\begin{equation}
%\left\{
\begin{aligned}
\label{evp-u}
\lambda \rho u-\mu_* \Delta u +\nabla\pi &=0  &&\mbox{in }\;\Omega\setminus\Sigma,\\
{\rm div}\, u &=0 && \mbox{in }\;\Omega\setminus\Sigma,\\
P_\Sigma[\![u]\!] &=0  && \mbox{on } \Sigma,\\
-2P_\Sigma [\![\mu_*D(u)\nu_\Sigma]\!] - \theta_*{\sigma^\prime_*}\nabla_\Sigma \vartheta_\Sigma
 &=0 && \mbox{on } \Sigma,\\
 -2[\![\mu_*D(u) \nu_\Sigma\cdot\nu_\Sigma]\!] +\lj\pi\rj +\sigma_* \cA_\Sigma h
-\theta_*{\sigma^\prime_*}H_*\vartheta_\Sigma
 &=0 && \mbox{on } \Sigma, \\
u&=0 &&\mbox{on } \partial\Omega,
\end{aligned}
%\right.
\end{equation}
%%%%%%%%%%
\begin{equation}
%\left\{
\begin{aligned}
\label{evp-theta}
\lambda \rho\kappa_*\vartheta - d_*\Delta \vartheta &=0 && \mbox{in }\; \Omega\setminus\Sigma,\\
[\![\vartheta]\!]=0, \quad \vartheta &=\vartheta_\Sigma && \mbox{on } \Sigma,\\
\lambda \kappa_{\Gamma_*}\vartheta_\Sigma-d_{\Gamma_*}\Delta_\Sigma\vartheta_\Sigma
-(l_*/\theta_*)j_\Sigma-[\![d_*\partial_{\nu_\Sigma}\vartheta]\!] - {\sigma'_*}\text{div}_\Sigma u_\Sigma
&= 0 &&\mbox{on } \Sigma,\\
\partial_\nu\vartheta &=0 && \mbox{on } \partial\Omega,
\end{aligned}
%\right.
\end{equation}
%%%%%%%%%%%%%
\begin{equation}
\hspace{1.4cm}
%\left\{
\begin{aligned}
\label{evp-h}
-2[\![\mu_*D(u)\nu_\Sigma\cdot\nu_\Sigma/\rho]\!] +\lj \pi/\rho\rj
+ l_*\vartheta_\Sigma + \gamma_* j_\Sigma &= 0 &&\mbox{on } \Sigma,\\
\lambda[\![\rho]\!]h -[\![\rho u\cdot\nu_\Sigma]\!] &= 0 &&\mbox{on }\Sigma.
\end{aligned}
%\right.
\end{equation}
Observe that on $\Sigma$ we may write 
$$u_k = P_\Sigma u + \lambda h\nu_\Sigma + j_\Sigma\nu_\Sigma/\rho_k
=u_\Sigma+j_\Sigma\nu_\Sigma/\rho_k, \quad k=1,2.$$
By this identity, taking the inner product of the problem for $u$ with $u$ and integrating by parts we get
\begin{align*}
0&=\lambda |\rho^{1/2}u|_2^2 -({\rm div}\; T(u,\pi,\theta_*) |u)_2\\
%&=\lambda |\rho^{1/2}u|_2^2 +\int_\Omega T(u,\pi,\theta_*) :\nabla \bar{u}dx \\
%&\phantom{=\lambda |\rho^{1/2}u|_2^2\ }
%+\int_\Sigma (T_2(u_2,\pi_2,\vartheta_2) \nu_\Sigma\cdot \bar{u}_2-T_1(u_1,\pi_1,\vartheta_1) \nu_\Sigma\cdot \bar{u}_1)d\Sigma\\
&=\lambda |\rho^{1/2}u|_2^2 + 2|\mu_*^{1/2}D(u)|_2^2 \\
&\phantom{=}
+([\![T(u,\pi,\theta_*) \nu_\Sigma]\!]|P_\Sigma u + \lambda h\nu_\Sigma)_\Sigma
+ ([\![T(u,\pi,\theta_*) \nu_\Sigma\cdot\nu_\Sigma/\rho]\!]|j_\Sigma)_\Sigma\\
&=\lambda |\rho^{1/2}u|_2^2 + 2|\mu_*^{1/2}D(u)|_2^2
+\sigma_*\bar{\lambda} (\cA_\Sigma h|h)_\Sigma + l_*(\vartheta|j_\Sigma)_\Sigma\\
&\phantom{=}+\gamma_*|j_\Sigma|_\Sigma^2-\theta_*{\sigma^\prime_*}H_*\bar{\lambda}(\vartheta| h)_\Sigma
- \theta_*{\sigma^\prime_*}(\nabla_\Sigma\vartheta|P_\Sigma u)_\Sigma,
\end{align*}
since $[\![T(u,\pi,\theta_*)\nu_\Sigma]\!]=\sigma_* \cA_\Sigma h\nu_\Sigma
-\theta_*{\sigma^\prime_*}\vartheta_\Sigma H_*\nu_\Sigma- \theta_*{\sigma^\prime_*}\nabla_\Sigma \vartheta_\Sigma$ and, moreover,
$[\![T(u,\pi,\theta_*)\nu_\Sigma\cdot\nu_\Sigma/\rho]\!]= l_*\vartheta+\gamma_*j_\Sigma $.
On the other hand, the inner product of the equation for $\vartheta$ with  $\vartheta$
by an integration by parts and $[\![\vartheta]\!]=0$ leads to
\begin{align*}
0&= \lambda|(\rho\kappa_*)^{1/2}\vartheta|_2^2 + |d_*^{1/2}\nabla\vartheta|_2^2
+([\![d_*\partial_{\nu_\Sigma}\vartheta]\!]|\vartheta)_\Sigma\\
&= \lambda(|(\rho\kappa_*)^{1/2}\vartheta|_2^2 + |\kappa_{\Gamma_*}^{1/2}\vartheta_\Sigma|_\Sigma^2)
+ |d_*^{1/2}\nabla\vartheta|_2^2 + |d_{\Gamma_*}^{1/2}\nabla_\Sigma\vartheta_\Sigma|_{\Sigma}^2\\
& \phantom{=} - l_*(j_\Sigma|\vartheta)_\Sigma/\theta_*
+ {\sigma^\prime_*}(P_\Sigma u|\nabla_\Sigma\vartheta_\Sigma)_\Sigma + \lambda {\sigma^\prime_*}H_*(h|\vartheta)
\end{align*}
where we employed  $[\![d_*\partial_{\nu_\Sigma}\vartheta]\!]
= \kappa_{\Gamma_*}\lambda\vartheta_\Sigma-d_{\Gamma_*}\Delta_\Sigma\vartheta_\Sigma
-(l_*/\theta_*)j_\Sigma - {\sigma^\prime_*}\text{div}_\Sigma u_\Sigma$ and $u_\Sigma=P_\Sigma u +(u_\Sigma\cdot\nu_\Sigma)\nu_\Sigma$.
Adding the first identity to the second multiplied by $\theta_*$
and taking real parts yields the important relation
\begin{align}\label{evid}
0&={\rm Re}\,\lambda |\rho^{1/2}u|_2^2 + 2|\mu_*^{1/2}D(u)|_2^2 +\sigma_*{\rm Re}\,\lambda (\cA_\Sigma h|h)_\Sigma \nonumber\\
&+\theta_*({\rm Re}\,\lambda|(\rho\kappa_*)^{1/2}\vartheta|_2^2 + |d_*^{1/2}\nabla\vartheta|_2^2)\\
&+\gamma_*|j_\Sigma|_\Sigma^2 + \theta_*({\rm Re}\,\lambda |\kappa_{\Gamma_*}^{1/2}\vartheta_\Sigma|_\Sigma^2
+|d_{\Gamma_*}^{1/2}\nabla_\Sigma\vartheta_\Sigma|_\Sigma^2)\nn.
\end{align}
On the other hand, if ${\rm Im}\, \lambda\neq0$, taking imaginary parts separately we get
%with $a=l_*(\vartheta|j_\Sigma)_\Sigma$
\begin{align*}
0&= {\rm Im}\, \lambda|\rho^{1/2}u|_2^2-{\rm Im}\, \lambda\, \sigma_* (\cA_\Sigma h|h)_\Sigma
+ {\rm Im}\,l_*(\vartheta|j_\Sigma)_\Sigma \\
&-  {\rm Im}\,\{\bar\lambda \theta_*{\sigma^\prime_*}H_*(\vartheta | h)_\Sigma\}
-  {\rm Im}\,\theta_* {\sigma^\prime_*} (\nabla_\Sigma\vartheta | P_\Sigma u)_\Sigma\\
0&= \theta_* {\rm Im}\, \lambda ( |(\rho\kappa_*)^{1/2}\vartheta|_2^2 + |\kappa_{\Gamma_*}^{1/2}\vartheta_\Sigma|_\Sigma^2)
 - {\rm Im}\, l_*(j_\Sigma |\vartheta)_\Sigma \\
 & +  {\rm Im}\, \{\lambda \theta_*{\sigma^\prime_*}H_*(h|\vartheta)_\Sigma\}
 + {\rm Im}\, \theta_* {\sigma^\prime_*} (P_\Sigma u| \nabla_\Sigma\vartheta)_\Sigma,
\end{align*}
hence
$$
\sigma_* (\cA_\Sigma h|h)_\Sigma = |\rho^{1/2}u|_2^2-\theta_*( |(\rho\kappa_*)^{1/2}\vartheta|_2^2
+|\kappa_{\Gamma_*}^{1/2}\vartheta_\Sigma|_\Sigma^2).
$$
Inserting this identity into \eqref{evid} leads to
$$
0=2{\rm Re}\,\lambda |\rho^{1/2}u|_2^2 + 2|\mu_*^{1/2}D(u)|_2^2
+ \theta_*|d_*^{1/2}\nabla\vartheta|_2^2 + \gamma_*|j_\Sigma|_\Sigma^2
+\theta_* |d_{\Gamma_*}^{1/2}\nabla_\Sigma\vartheta_\Sigma|_\Sigma^2 .
$$
This shows that if  $\lambda$ is an eigenvalue of $-L$ with ${\rm Re}\, \lambda\geq 0$ then $\lambda$ is real.
In fact, otherwise this identity implies $\vartheta=const=\vartheta_\Sigma$, $D(u)=0$ and $j_\Sigma=0$,
and then $u=0$ by Korn's inequality and the no-slip condition on $\partial\Omega$,
as well as $(\vartheta,\vartheta_\Sigma,h)=(0,0,0)$ by the equations for $\vartheta$ and $h$, since $\lambda\neq0$.

\medskip

\noindent
{\bf 4.} \, Suppose now that $\lambda>0$ is an eigenvalue of $-L$. Then we further have
$$
\lambda \int_{\Sigma} h d\Sigma = \int_\Sigma (u_k\cdot\nu_\Sigma -j_\Sigma/\rho_k) d\Sigma
= -\rho_k^{-1}\int_\Sigma j_\Sigma\, d\Sigma=0,
$$
as $\int_\Sigma u_k \cdot \nu_\Sigma\, d\Sigma= \int_{\Omega_k}{\rm div}\,u_k\,dx$.
%and
%$$\int_\Sigma j_\Sigma\,d\Sigma= \lj 1/\rho\rj\int_\Sigma \lj u\cdot \nu_\Sigma\rj d\Sigma
%=\lj 1/\rho\rj\int_{\Omega}{\rm div}\,u\,dx =0.$$
Hence the mean values of $h$ and  $j_\Sigma$ both vanish
since the densities are non-equal.
Integrating the equations for $\vartheta$ and $\vartheta_\Sigma$, we obtain from this the relation
$$
\kappa_{\Gamma_*}\int_\Sigma\vartheta_\Sigma\,d\Sigma + \int_\Omega\rho \kappa_*\vartheta\,dx=0.
$$
Since  $\cA_\Sigma$ is positive semidefinite on functions with mean zero in case $\Sigma$ is connected,
by \eqref{evid} we obtain $(u,\vartheta,h)=(0,0,0)$, i.e.\ in this case there are no positive eigenvalues.
On the other hand, if $\Sigma$ is disconnected, there is at least one positive eigenvalue. To prove this we need some preparations.

\medskip

\noindent
{\bf 5.} \, First we consider the heat problem
%%%%%%%%%%
\begin{equation}
%\left\{
\begin{aligned}
\label{NDdiffusion}
\lambda\rho\kappa_* \vartheta -d_*\Delta \vartheta &=0 &&\text{in }\; \Omega\setminus\Sigma,\\
[\![\vartheta]\!] &=0 &&\mbox{on } \Sigma,\\
\vartheta_\Sigma =\vartheta|_\Sigma &=g &&\mbox{on }\: \Sigma,\\
\partial_\nu\vartheta &=0 &&\mbox{on }\partial\Omega,
\end{aligned}
%\right.
\end{equation}
and define $D_\lambda^Hg=-\lj d_*\pd_{\nu_\Sigma}\vartheta\rj$ on $\Sigma$, where
$D_\lambda^H$ denotes the Dirichlet-to-Neumann operator for this heat problem.
The properties of $D_\lambda^H$ are stated in \cite{PSW11}.
Then the solution $\vartheta$ of \eqref{evp-theta} can be expressed by
\begin{equation}
(\kappa_{\Gamma_*}\lambda - d_{\Gamma_*}\Delta_\Sigma + D_\lambda^H)\vartheta_\Sigma
-(l_*/\theta_*)j_\Sigma + \lambda {\sigma^\prime_*}H_*h - {\sigma^\prime_*}{\rm{div}}_\Sigma P_\Sigma u=0
\label{sol-heat}
\end{equation}
where we made use of the identity
$$
{\rm{div}}_\Sigma u_\Sigma={\rm{div}}_\Sigma P_\Sigma u - H_*u_\Sigma\cdot\nu_\Sigma={\rm{div}}_\Sigma P_\Sigma u-H_*\lambda h.
$$
\medskip
\goodbreak
\noindent
{\bf 6.} \, Next we solve the asymmetric Stokes problem
%%%%%%%%%%%%%%%%
\begin{equation}
%\left\{
\begin{aligned}\label{asStokes}
\lambda \rho u-\mu_* \Delta u +\nabla\pi &=0 &&\mbox{in }\; \Omega\setminus\Sigma,\\
{\rm div}\, u &=0  &&\mbox{in }\;\Omega\setminus\Sigma,\\
P_\Sigma[\![u]\!] &= 0  &&\mbox{on }\Sigma,\\
 -[\![T(u,\pi,\theta_*) \nu_\Sigma\cdot\nu_\Sigma]\!] &= g_1 &&\mbox{on }\Sigma,\\
  -[\![T(u,\pi,\theta_*) \nu_\Sigma\cdot\nu_\Sigma/\rho]\!] &= g_2 &&\mbox{on } \Sigma,\\
  P_\Sigma[\![T(u,\pi,\theta_*) \nu_\Sigma]\!]&=g_3 &&\mbox{on }\Sigma,\\
  u&=0 &&\mbox{on } \partial\Omega,
\end{aligned}
%\right.
\end{equation}
to obtain the output 
\begin{align*}
[\![\rho u\cdot\nu_\Sigma]\!]/[\![\rho]\!] &= S_\lambda^{11}g_1+S_\lambda^{12}g_2+S_\lambda^{13}g_3,\\
[\![u\cdot\nu_\Sigma]\!]/[\![1/\rho]\!] &= S_\lambda^{21}g_1+S_\lambda^{22}g_2+S_\lambda^{23}g_3,\\
P_\Sigma u&=S_\lambda^{31}g_1+S_\lambda^{32}g_2+S_\lambda^{33}g_3.
\end{align*}
Note that $g_1$, $g_2$, $(S_\lambda g)_1$, $(S_\lambda g)_2\in L_2(\Sigma)$ are scalar functions, while
$g_3$ and $(S_\lambda g)_3$ are vectors tangent to $\Sigma$, i.e., $g_3$ and $(S_\lambda g)_3\in L_2(\Sigma;T\Sigma)$,
with $T\Sigma$ being the tangent bundle of $\Sigma$.
For this problem we have
\begin{prop}\label{as-Stokes}
The operator $S_\lambda$ for the Stokes problem \eqref{asStokes} admits a bounded extension to
$L_{2}(\Sigma)^2\times L_2(\Sigma;T\Sigma)$ for $\lambda\geq0$ and has the following properties.\\
{\bf (i)} \, If $u$ denotes the solution of \eqref{asStokes}, then
$$ (S_\lambda g|g)_{L_2} \!=\! \lambda \int_\Omega \rho|u|^2\,dx
+ 2\int_\Omega \mu_*|D(u)|_2^2\, dx,\;\lambda\geq0, \; g\in L_{2}(\Sigma)^2\times L_2(\Sigma;T\Sigma) .$$
{\bf (ii)} \, $S_\lambda\in \cB(L_{2}(\Sigma)^2\times L_2(\Sigma;T\Sigma))$ is self-adjoint, positive semidefinite, and compact; in particular
\begin{align*}
& S^{11}_\lambda = [S_\lambda^{11}]^*,\quad S^{22}_\lambda = [S_\lambda^{22}]^*,\quad S^{33}_\lambda = [S_\lambda^{33}]^*\\
& S^{12}_\lambda = [S_\lambda^{21}]^*,\quad S^{13}_\lambda = [S_\lambda^{31}]^*,\quad S^{23}_\lambda = [S_\lambda^{32}]^*.
\end{align*}
{\bf (iii)} \, For each $\beta\in(0,1/2)$ there is a constant $C_\beta>0$ such that
$$ |S_\lambda|_{\cB(L_{2})}\leq \frac{C_\beta}{(1+\lambda)^\beta}, \quad \lambda\geq0.$$
{\bf (iv)}\, $|S_\lambda|_{\cB(L_{2},H^{1}_2)} \leq C$ uniformly for $\lambda\geq0$.
\vspace{2mm}\\
{\bf (v)} \, $S_\lambda^{11}, S_\lambda^{22}:
L_{2,0}(\Sigma)\to H^{1}_2(\Sigma)\cap L_{2,0}(\Sigma)$ are isomorphisms, for each $\lambda\geq0$,
where $L_{2,0}(\Sigma)=\{u\in L_2(\Sigma)\mid \int_\Sigma u\,d\Sigma=0\}$.
\end{prop}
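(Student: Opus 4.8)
The plan is to build the proof on two pillars: the variational identity obtained by testing the momentum equation in \eqref{asStokes} with $u$, and the parameter-dependent a priori estimates for the asymmetric two-phase Stokes transmission problem, which for $\lambda\ge0$ are available from \cite{KPW10, PrSh12, PSW13}. Those results yield, for data in the natural trace spaces, a unique solution $(u,\pi)\in H^2_p(\Omega\setminus\Sigma)^n\times\dot{H}^1_p(\Omega\setminus\Sigma)$ of \eqref{asStokes}; in particular $S_\lambda$ is well defined on a dense subspace of $L_2(\Sigma)^2\times L_2(\Sigma;T\Sigma)$, and the task is to extend it to all of $L_2$ and to establish (i)--(v). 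The first step is (i): I compute $(\lambda\rho u-{\rm div}\,T(u,\pi,\theta_*)|u)_{L_2(\Omega)}=0$ and integrate by parts over $\Omega_1$ and $\Omega_2$ separately. Using ${\rm div}\,u=0$, $T(u,\pi,\theta_*)=2\mu_*D(u)-\pi I$, and $u|_{\partial\Omega}=0$, the bulk contributions collapse to $\lambda\int_\Omega\rho|u|^2\,dx+2\int_\Omega\mu_*|D(u)|_2^2\,dx$, while the interface produces $\int_\Sigma[(T_2\nu_\Sigma\cdot u_2)-(T_1\nu_\Sigma\cdot u_1)]\,d\Sigma$. Decomposing $u_k|_\Sigma=P_\Sigma u+(u_k\cdot\nu_\Sigma)\nu_\Sigma$, using $[\![P_\Sigma u]\!]=0$ and the algebraic identity $u_k\cdot\nu_\Sigma=(S_\lambda g)_1+(S_\lambda g)_2/\rho_k$ (valid since $\rho_1\neq\rho_2$), and inserting the interface conditions $-[\![T\nu_\Sigma\cdot\nu_\Sigma]\!]=g_1$, $-[\![T\nu_\Sigma\cdot\nu_\Sigma/\rho]\!]=g_2$, $P_\Sigma[\![T\nu_\Sigma]\!]=g_3$, the interface term is identified with $(S_\lambda g|g)_{L_2}$, which is (i).

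Now the right-hand side of (i) is nonnegative for $\lambda\ge0$ and, by Korn's inequality and $u|_{\partial\Omega}=0$, dominates $c\,\|u\|_{H^1_2(\Omega\setminus\Sigma)}^2$ with $c$ independent of $\lambda\ge0$; since each component of $S_\lambda g$ is a trace of $u$ on $\Sigma$, one has $\|S_\lambda g\|_{L_2(\Sigma)}\le C\|u\|_{H^1_2(\Omega\setminus\Sigma)}$, and Cauchy--Schwarz in (i) gives $\|S_\lambda g\|_{L_2}\le C\|g\|_{L_2}$ uniformly in $\lambda\ge0$. Hence $S_\lambda$ extends to a bounded operator on $L_2(\Sigma)^2\times L_2(\Sigma;T\Sigma)$ with $(S_\lambda g|g)\ge0$. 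Self-adjointness is obtained by polarization: the same integration by parts for two data $g,\tilde g$ with solutions $(u,\pi),(\tilde u,\tilde\pi)$ shows that the symmetric form $\lambda(\rho u|\tilde u)_{L_2(\Omega)}+2(\mu_*D(u)|D(\tilde u))_{L_2(\Omega)}$ equals both $(S_\lambda g|\tilde g)_{L_2}$ and $(S_\lambda\tilde g|g)_{L_2}$; reading off the block structure gives $S^{11}_\lambda=[S^{11}_\lambda]^*$, $S^{12}_\lambda=[S^{21}_\lambda]^*$, and the rest. For compactness and (iv) I use the uniform-in-$\lambda$ regularity estimate for \eqref{asStokes}, namely that $L_2(\Sigma)$-stress data produce $u\in H^{3/2}_2(\Omega\setminus\Sigma)$ with $\|u\|_{H^{3/2}_2(\Omega\setminus\Sigma)}\le C\|g\|_{L_2(\Sigma)}$, $C$ independent of $\lambda\ge0$; hence $S_\lambda g\in H^1_2(\Sigma)$ with the same bound, which is (iv), and since the embedding $H^1_2(\Sigma)\hookrightarrow L_2(\Sigma)$ is compact, $S_\lambda$ is compact. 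This completes (ii). Part (iii) follows by combining (i) with a trace--interpolation estimate $\|u\|_{L_2(\Sigma)}^2\le C(\|u\|_{L_2(\Omega)}\|u\|_{H^1_2(\Omega\setminus\Sigma)}+\|u\|_{L_2(\Omega)}^2)$ and the bounds $\|u\|_{L_2(\Omega)}^2\le(S_\lambda g|g)/(\lambda\min\{\rho_1,\rho_2\})$ and $\|\nabla u\|_{L_2(\Omega)}^2\le C(S_\lambda g|g)$, which give $\|S_\lambda g\|_{L_2}^2\le C(S_\lambda g|g)/\sqrt{\lambda}\le C\|g\|_{L_2}\|S_\lambda g\|_{L_2}/\sqrt{\lambda}$ and hence $\|S_\lambda g\|_{L_2}\le C(1+\lambda)^{-1/2}\|g\|_{L_2}$; in particular (iii) holds for every $\beta\in(0,1/2)$.

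For (v), observe that $S^{11}_\lambda$ and $S^{22}_\lambda$ are self-adjoint by (ii), bounded $L_2\to H^1_2$ by (iv), and map into $L_{2,0}(\Sigma)$ since $\int_\Sigma u_k\cdot\nu_\Sigma\,d\Sigma=\int_{\Omega_k}{\rm div}\,u_k\,dx=0$ for $k=1,2$; moreover they annihilate constants, because for constant $g_1$ (resp.\ $g_2$) the solution of \eqref{asStokes} with data $(g_1,0,0)$ (resp.\ $(0,g_2,0)$) is $u\equiv0$ with $\pi$ piecewise constant. If $S^{11}_\lambda g_1=0$ with $g_1\in L_{2,0}(\Sigma)$, then $(S_\lambda(g_1,0,0)|(g_1,0,0))=0$ forces $u\equiv0$ by (i), hence $g_1=[\![\pi]\!]$ is constant on $\Sigma$ because $D(u)=0$ and $-[\![\pi/\rho]\!]=g_2=0$, and the mean-zero condition then yields $g_1=0$; similarly $S^{22}_\lambda$ is injective on $L_{2,0}(\Sigma)$. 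Surjectivity onto $H^1_2(\Sigma)\cap L_{2,0}(\Sigma)$ is obtained by solving, for given $f$ in this space, the auxiliary mixed Stokes transmission problem gotten from \eqref{asStokes} by prescribing $[\![\rho u\cdot\nu_\Sigma]\!]/[\![\rho]\!]=f$ in place of the condition carrying $g_1$ and keeping $g_2=g_3=0$; this problem is well posed by the theory of \cite{KPW10, PSW13}, with solvability condition exactly $\int_\Sigma f\,d\Sigma=0$, and its solution produces $g_1:=-[\![T\nu_\Sigma\cdot\nu_\Sigma]\!]$ which, after subtracting its mean (constants being annihilated), is the required preimage in $L_{2,0}(\Sigma)$; uniqueness is the injectivity just shown, and the open mapping theorem gives the isomorphism property. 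The analogous construction handles $S^{22}_\lambda$.

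The step I expect to be the main obstacle is the uniform-in-$\lambda$ regularity and decay package underlying (iii)--(iv): the energy identity (i) alone yields only $H^1_2$-control of $u$, whereas (iv) requires $H^{3/2}_2$-control up to $\Sigma$ with a constant independent of $\lambda\ge0$, and (iii) requires the precise decay rate of the interface traces as $\lambda\to\infty$. These rest on the quantitative parameter-dependent a priori estimates for the asymmetric two-phase Stokes resolvent problem of \cite{KPW10, PrSh12, PSW13}, and checking that these transfer to the $L_2$-setting with $\lambda$-uniform constants is the delicate point; by comparison, (i), the symmetry part of (ii), and (v) are comparatively routine once (iv) is in hand.
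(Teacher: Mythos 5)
Your proposal is correct in substance and follows the route the paper itself takes: the paper's proof of this proposition is a one-line deferral to the analogous result in \cite{PSW13}, and your reconstruction -- energy identity by testing with $u$, Korn plus Cauchy--Schwarz for the $\lambda$-uniform $L_2$-bound, polarization of the symmetric form for self-adjointness and the block adjoint relations, trace interpolation for the decay in (iii), a parameter-uniform regularity estimate for (iv) and hence compactness, and injectivity plus an auxiliary transmission problem for (v) -- is exactly the kind of argument that reference supplies. Three remarks. First, for (i) to come out as stated the tangential stress condition in \eqref{asStokes} must carry the same sign as the two normal ones, i.e.\ $-P_\Sigma[\![T(u,\pi,\theta_*)\nu_\Sigma]\!]=g_3$; with the displayed sign the third pairing in $(S_\lambda g|g)$ enters with a minus. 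This is a sign slip in \eqref{asStokes} rather than in your argument (the application in step 7 of Section 4, where $g_3=\theta_*\sigma'_*\nabla_\Sigma\vartheta_\Sigma$ comes from $-2P_\Sigma[\![\mu_*D(u)\nu_\Sigma]\!]=\theta_*\sigma'_*\nabla_\Sigma\vartheta_\Sigma$, confirms the minus convention), but your computation should record it. Second, when invoking Korn you should note that it is the two-phase version: $\|u\|_{H^1_2(\Omega\setminus\Sigma)}\le C\|D(u)\|_{L_2}$ holds with $\lambda$-independent constant because $u=0$ on $\partial\Omega$ controls the outer phase and the condition $P_\Sigma[\![u]\!]=0$ excludes rigid motions of the inner balls. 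Third, the two places where the real quantitative work sits are precisely the ones you only cite: the $\lambda$-uniform $L_2(\Sigma)\to H^1_2(\Sigma)$ smoothing behind (iv), and the unique solvability together with the $H^1_2\to L_2$ stress-trace bound for the auxiliary problem in which $[\![\rho u\cdot\nu_\Sigma]\!]/[\![\rho]\!]$ is prescribed, which underlies surjectivity in (v). These are not gaps relative to the paper, which delegates the same points to \cite{KPW10,PSW13}, but they are the substance of the proof. Incidentally, your interpolation argument for (iii) yields the stronger rate $(1+\lambda)^{-1/2}$, which of course implies the stated bound for every $\beta\in(0,1/2)$.
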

\begin{proof} The assertions follow from similar arguments as in the proof of \cite[Proposition 4.3]{PSW13}.
\end{proof}
\medskip
\noindent
{\bf 7.} \,
The following lemma is needed in the proof of the main result of this section.
\begin{lem}\label{pos-def}
Let $H$, $V$ be Hilbert spaces.
Let $B$ be a positive definite operator on $H$,
$A:\cD(A)\subset H \to V$ be a closed, densely defined operator such that $A\cD(B^{1/2})\subset \cD(A^*)$.
Then $A^*A+B$ is a self-adjoint positive definite operator with
$\cD(A^*A+B)=\cD(B)$ and
$$
|A(A^*A+B)^{-1}A^*|_{\cB(V)}\le 1.
$$
In addition, if $A(A^*A+B)^{-1}A^*v=v$, then $v=0$.
\end{lem}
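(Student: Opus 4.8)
The plan is to rewrite $A^*A+B$ using the positive definite operator $B$ as the ``reference'' inner product, and then to exhibit the operator $A(A^*A+B)^{-1}A^*$ as a self-adjoint operator that is conjugate to an orthogonal projection, from which both the norm bound and the rigidity statement follow. First I would verify the soft functional-analytic claims: since $B$ is positive definite and self-adjoint, $B^{1/2}$ is well-defined, $\cD(B^{1/2})$ is a core issue only through the hypothesis $A\cD(B^{1/2})\subset\cD(A^*)$, and $A^*A$ is self-adjoint and nonnegative by von Neumann's theorem on $T^*T$. The sum $A^*A+B$ is then self-adjoint and positive definite on $\cD(A^*A)\cap\cD(B)$; one should check that this domain actually equals $\cD(B)$, which uses that $A^*A$ is $B$-bounded in the appropriate sense (a consequence of the inclusion hypothesis together with the closed graph theorem). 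Call $S:=A^*A+B$, so $S^{-1}\in\cB(H)$ with range $\cD(B)=\cD(S)$.

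Next I would compute, for $v\in V$, with $x:=S^{-1}A^*v\in\cD(S)=\cD(B)$,
\begin{equation*}
(A(A^*A+B)^{-1}A^*v\,|\,v)_V=(Ax\,|\,v)_V=(x\,|\,A^*v)_H=(x\,|\,Sx)_H=|Ax|_V^2+(Bx\,|\,x)_H.
\end{equation*}
On the other hand, applying the same identity with $v$ replaced appropriately, I want to compare $|Ax|_V^2+(Bx|x)_H$ with $|v|_V^2$. The cleanest route is to introduce the Hilbert space $H_B:=(\cD(B^{1/2}),(B^{1/2}\cdot|B^{1/2}\cdot)_H)$ and observe that $A$ extends to a bounded operator from $H_B$ into $V$ — indeed, for $y\in\cD(B^{1/2})$ one has $Ay\in\cD(A^*)$ by hypothesis, and a closed-graph argument in $H_B$ shows $|Ay|_V\le C|B^{1/2}y|_H$. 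Then $A^*A+B$, viewed through $H_B$, is exactly $I_{H_B}+$ (the $H_B$-realization of $A^*A$), and $A(A^*A+B)^{-1}A^*$ becomes $\widetilde A(I+\widetilde A^*\widetilde A)^{-1}\widetilde A^*$ where $\widetilde A\in\cB(H_B,V)$. For a genuinely bounded operator $\widetilde A$ the functional calculus gives $\widetilde A(I+\widetilde A^*\widetilde A)^{-1}\widetilde A^*=f(\widetilde A\widetilde A^*)$ with $f(s)=s/(1+s)$, and since $0\le f(s)<1$ on $[0,\infty)$, the operator is self-adjoint, positive semidefinite, and of norm $\le 1$; moreover $f(\widetilde A\widetilde A^*)v=v$ forces $v$ to lie in the eigenspace of $\widetilde A\widetilde A^*$ for eigenvalue $+\infty$, which is empty, so $v=0$.

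The rigidity statement ``$A(A^*A+B)^{-1}A^*v=v\Rightarrow v=0$'' can alternatively be obtained directly from the displayed identity: if $A(A^*A+B)^{-1}A^*v=v$ then, pairing with $v$,
\begin{equation*}
|v|_V^2=|Ax|_V^2+(Bx\,|\,x)_H\quad\text{with}\quad x=S^{-1}A^*v,
\end{equation*}
while at the same time $v=Ax$ gives $|v|_V^2=|Ax|_V^2$, whence $(Bx|x)_H=0$, and positive definiteness of $B$ yields $x=0$, hence $v=Ax=0$. This is the slicker argument and I would present it this way; the norm bound $\le 1$ likewise follows from $|v|_V^2=(Ax|v)_V\le |Ax|_V|v|_V$ combined with $|Ax|_V^2\le |Ax|_V^2+(Bx|x)_H=(Ax|v)_V\le|Ax|_V|v|_V$, giving $|Ax|_V\le|v|_V$ and therefore $|(A S^{-1}A^*v|v)_V|\le|v|_V^2$; since the operator is self-adjoint and positive semidefinite, this is exactly the operator-norm bound.

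I expect the main obstacle to be the domain identification $\cD(A^*A+B)=\cD(B)$ and the justification that $A$ extends boundedly from $\cD(B^{1/2})$ (with the graph norm of $B^{1/2}$) into $V$: these are where the hypothesis $A\cD(B^{1/2})\subset\cD(A^*)$ is actually used, and they require a careful closed-graph or form-domain argument rather than a one-line citation. The algebraic heart of the lemma — the identity $(AS^{-1}A^*v|v)=|AS^{-1}A^*v|_V^2+(B(S^{-1}A^*v)|S^{-1}A^*v)$ and the ensuing Cauchy–Schwarz bookkeeping — is then entirely routine.
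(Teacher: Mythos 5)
Your proposal is correct and, in the form you say you would actually present it (the identity $(AS^{-1}A^*v\,|\,v)_V=|Ax|_V^2+(Bx\,|\,x)_H$ with $x=S^{-1}A^*v$, Cauchy--Schwarz for the norm bound, and positive definiteness of $B$ forcing $x=0$, hence $v=Ax=0$, in the rigidity step), it is essentially the same argument as the paper's, including the preliminary closed-graph/relative-boundedness reasoning that gives $\cD(A^*A+B)=\cD(B)$. The $H_B$/functional-calculus detour you sketch is a valid alternative but not needed, and the paper does not use it.
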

\begin{proof}
By the closed graph theorem $A:\cD(B^{1/2})\to V$ is bounded.
The closedness of $A^*$ in turn implies that the operator $A^*A:\cD(B^{1/2})\to H$ is closed.
Another application of the closed graph theorem then shows
that $A^*A: \cD(B^{1/2})\to H$ is bounded as well. This implies that $A^*A$ is a lower order perturbation of $B$ and also that
$A^*A+B$ is self-adjoint and positive definite. Therefore,
$(A^*A+B)^{-1}: H\to \cD(B)$ exists and is bounded.
\par
For $v\in \cD(A^*)$ we have with $K:=ACA^*:=A(A^*A+B)^{-1}A^*$
\begin{align*}
|Kv|_V^2
&= (CA^*v\mid A^*ACA^*v)_H\\
&= (CA^*v\mid A^*v)_H - (CA^*v\mid BCA^*v)_H\\\
&= (ACA^*v\mid v)_V - (BCA^*v\mid CA^*v)_H\\\
&=(Kv\mid v)_V - (Bw\mid w)_H
\end{align*}
with $w=CA^*v=(A^*A+B)^{-1}A^*v$. Since $B$ is positive definite, there exists $\beta>0$ such that
\begin{equation}
|Kv|_V^2 \le |Kv|_V|v|_V -\beta |w|_H^2 \le |Kv|_V|v|_V
\label{kv}
\end{equation}
 which shows $|K|_{\cB(V)}\le 1$. Moreover if $Kv=v$, then
 $|v|^2\le|v|^2-\beta|w|^2$ holds from \eqref{kv}. Hence $w=0$,
and consequently $v=Kv=Aw=0$.
\end{proof}
\noindent
Now suppose that $\lambda>0$ is an eigenvalue of $-L$. We set
\begin{equation}
g=
\begin{pmatrix}
\theta_*{\sigma^\prime_*}H_*\vartheta_\Sigma- \sigma_*\cA_\Sigma h\\
-l_* \vartheta_\Sigma-\gamma_*j_\Sigma\\
\theta_*{\sigma^\prime_*}\nabla_\Sigma\vartheta_\Sigma
\end{pmatrix}
=
\begin{pmatrix}
-\sigma_*\cA_\Sigma h\\
-\gamma_* j_\Sigma\\
0
\end{pmatrix}
+\theta_* Q\vartheta_\Sigma
\label{g}
\end{equation}
with $Q=({\sigma^\prime_*}H_*, -(l_*/\theta_*), {\sigma^\prime_*}\nabla_\Sigma)^{\sf T}$ to obtain
\begin{equation}
%\begin{pmatrix}
(\lambda h, j_\Sigma, P_\Sigma u)^{\sf T}
%\end{pmatrix}
=S_\lambda g,\quad
S_\lambda=(S_\lambda^{ij})_{1\le i,j\le 3}.
\label{Slambda}
\end{equation}
We recall \eqref{sol-heat}. Since
\begin{align*}
-(l_*/\theta_*)j_\Sigma + \lambda {\sigma^\prime_*}H_*h \! - \!{\sigma^\prime_*}{\rm{div}}_\Sigma P_\Sigma u
&=Q^*S_\lambda g \\
&= \theta_*Q^*S_\lambda Q\vartheta_\Sigma \!-\! Q^*S_\lambda (\sigma_*\cA_\Sigma h, \gamma_* j_\Sigma, 0)^{\sf T},
\end{align*}
\eqref{sol-heat} is equivalent to
\begin{equation}
(\kappa_{\Gamma_*}\lambda - d_{\Gamma_*}\Delta_\Sigma + D_\lambda^H + \theta_*Q^*S_\lambda Q)\vartheta_\Sigma
= Q^*S_\lambda (\sigma_*\cA_\Sigma h, \gamma_* j_\Sigma, 0)^{\sf T}.
\label{vartheta-Gamma}
\end{equation}
Observing that $\kappa_{\Gamma_*}\lambda - d_{\Gamma_*}\Delta_\Sigma + D_\lambda^H + \theta_*Q^*S_\lambda Q$
is injective for $\lambda\ge 0$,
we solve the equation above for $\vartheta_\Sigma$ to the result
\begin{equation*}
\vartheta_\Sigma = L_\lambda Q^* S_\lambda ( \sigma_*\cA_\Sigma h, \gamma_* j_\Sigma, 0)^{\sf T}
%\label{vartheta-Gamma}
\end{equation*}
with
$L_\lambda= (\kappa_{\Gamma_*}\lambda - d_{\Gamma_*}\Delta_\Sigma + D_\lambda^H + \theta_*Q^*S_\lambda Q)^{-1}$.
We set
$$
S_\lambda Q=(u_1,u_2,u_3)^{\sf T}.
$$
Combining \eqref{Slambda}, \eqref{g} and \eqref{vartheta-Gamma}, we obtain
\begin{align}
\begin{pmatrix}
\lambda h\\
j_\Sigma\\
P_\Sigma u
\end{pmatrix}
&=\theta_*S_\lambda Q L_\lambda (S_\lambda Q)^*
\begin{pmatrix}
\sigma_*\cA_\Sigma h\\
\gamma_* j_\Sigma\\
0
\end{pmatrix}
-S_\lambda
\begin{pmatrix}
\sigma_*\cA_\Sigma h\\
\gamma_* j_\Sigma\\
0
\end{pmatrix}\nn\\
&=-(S_\lambda -\theta_*S_\lambda Q L_\lambda Q^*S_\lambda )
\begin{pmatrix}
\sigma_*\cA_\Sigma h\\
\gamma_* j_\Sigma\\
0
\end{pmatrix}.
\label{simul}
\end{align}
In order to obtain the positivity of $S_\lambda -\theta_*S_\lambda Q L_\lambda Q^*S_\lambda$,
we symmetrize it as
$$
S_\lambda -\theta_*S_\lambda Q L_\lambda Q^*S_\lambda
= S_\lambda^{1/2} ( I - \theta_*S_\lambda^{1/2} Q L_\lambda Q^*S_\lambda^{1/2}) S_\lambda^{1/2}
=:S_\lambda^{1/2} (I-K) S_\lambda^{1/2},
$$
with 
$$K=\theta_*S_\lambda^{1/2} Q L_\lambda Q^*S_\lambda^{1/2}=A(A^*A+B)^{-1}A^*.$$ 
Here we have set $A=\theta_*^{1/2}S_\lambda^{1/2}Q$ with domain $\cD(A)=H_2^1(\Sigma)$ and 
$B=\kappa_{\Gamma_*}\lambda-d_{\Gamma_*}\Delta_\Sigma+D_\lambda^H$ 
with domain $\cD(B)=H_2^2(\Sigma)$. Furthermore, $H=L_2(\Sigma)$ and $V=L_{2}(\Sigma)^2\times L_2(\Sigma;T\Sigma)$.

By Lemma \ref{pos-def}, we know $|K|\le1$, therefore it holds that
\begin{align*}
(S_\lambda v -\theta_*S_\lambda Q L_\lambda Q^*S_\lambda v \mid v)
&= |S_\lambda^{1/2} v|^2 - (K S_\lambda^{1/2} v\mid S_\lambda^{1/2} v)\\
&\ge |S_\lambda^{1/2} v|^2 -|K| |S_\lambda^{1/2} v|^2\ge 0,
\end{align*}
which shows the positivity of $S_\lambda -\theta_*S_\lambda Q L_\lambda Q^*S_\lambda$.
\par
Writing the upper left $2\times 2$ block of $S_\lambda -\theta_*S_\lambda Q L_\lambda Q^*S_\lambda$ as
\begin{equation*}
S_\lambda^0 : =
\begin{pmatrix}
R_\lambda^1 & R_\lambda^*\\
R_\lambda & R_\lambda^2
\end{pmatrix},
\end{equation*}
$S_\lambda^0$ is also positive. Then by \eqref{simul} it holds that
\begin{equation}
\begin{pmatrix}
\lambda h\\
0
\end{pmatrix}
+S_\lambda^0
\begin{pmatrix}
\sigma_*\cA_\Sigma h \\
\gamma_* j_\Sigma
\end{pmatrix}
+\begin{pmatrix}
0\\
j_\Sigma
\end{pmatrix}
\!=\!
\begin{pmatrix}
\lambda h\\
0
\end{pmatrix}
+
\begin{pmatrix}
R_\lambda^1 & R_\lambda^*\\
R_\lambda   & R_\lambda^2+1/\gamma_*
\end{pmatrix}
\begin{pmatrix}
\sigma_*\cA_\Sigma h \\
\gamma_* j_\Sigma
\end{pmatrix}
\!=\!
\begin{pmatrix}
0\\
0
\end{pmatrix}.
\label{matrix}
\end{equation}
The following lemma is needed to solve \eqref{matrix}.

\begin{lem}[Schur]\label{Schur}
Let $H$ be a Hilbert space, $S,T,R\in\cB(H)$, $S=S^*$, $T=T^*$ and suppose that $T$ is invertible.
If
\begin{equation*}
\begin{pmatrix}
S & R^*\\
R & T
\end{pmatrix}\ge 0 \quad \text{on}\ H\times H,
\end{equation*}
then $S-R^*T^{-1}R\ge 0$ on $H$.
\end{lem}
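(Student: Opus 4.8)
The argument is a Hilbert-space completion of the square: one tests the block positivity on vectors $(x,y)$ and then inserts the optimal choice $y=-T^{-1}Rx$. First I would note that the hypothesis, applied to vectors of the form $(0,y)$, yields $(Ty\mid y)\ge0$ for all $y\in H$, so that $T\ge0$. Combined with the assumed invertibility of $T$, this shows that $T$ is positive definite; hence $T^{-1}\in\cB(H)$ exists, is self-adjoint, and is positive definite as well. In particular $(T^{-1}z\mid z)\ge0$ for all $z\in H$, so every inner product involving $T^{-1}$ appearing below is real, and $R^*T^{-1}R$ is self-adjoint.

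Next, for arbitrary $x,y\in H$ the assumed positivity of the block matrix reads
\[
(Sx\mid x)+2\,{\rm Re}\,(Rx\mid y)+(Ty\mid y)\ge0 .
\]
I would specialize to $y=-T^{-1}Rx$, so that $Ty=-Rx$. Then $(Ty\mid y)=(Rx\mid T^{-1}Rx)=(R^*T^{-1}Rx\mid x)$ and $2\,{\rm Re}\,(Rx\mid y)=-2(R^*T^{-1}Rx\mid x)$, both real by the previous step. Substituting, the left-hand side collapses to $((S-R^*T^{-1}R)x\mid x)$, whence $((S-R^*T^{-1}R)x\mid x)\ge0$ for every $x\in H$. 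Since $S-R^*T^{-1}R$ is self-adjoint, this is precisely the assertion $S-R^*T^{-1}R\ge0$.

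\emph{Alternative route.} Equivalently one can verify the factorization
\[
\begin{pmatrix} S & R^*\\ R & T\end{pmatrix}
=L\begin{pmatrix} S-R^*T^{-1}R & 0\\ 0 & T\end{pmatrix}L^*,\qquad
L:=\begin{pmatrix} I & R^*T^{-1}\\ 0 & I\end{pmatrix},
\]
in which $L$ is bounded with bounded inverse, so that positivity of the block matrix is equivalent to positivity of the block-diagonal middle factor; reading off its upper-left entry gives the claim. Neither route presents a real obstacle. The only points deserving a line are the implication ``$T$ invertible and $T\ge0$ $\Rightarrow$ $T^{-1}\ge0$'' and, in the complex setting, the bookkeeping of real parts, which is handled automatically once $T^{-1}\ge0$ is available and the special vector $y=-T^{-1}Rx$ is used.
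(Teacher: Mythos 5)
Your proposal is correct and follows essentially the same route as the paper: the paper's proof likewise tests the block form on the vector $(x,-T^{-1}Rx)$ and observes that the quadratic form collapses to $((S-R^*T^{-1}R)x\mid x)\ge 0$. Your additional remarks (positivity of $T^{-1}$, the $LDL^*$ factorization) are fine but not needed beyond the paper's one-line computation.
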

\begin{proof}
For $x$ fixed, we set
$(x,y)^{\sf T} = (x, -T^{-1}Rx)^{\sf T}$. Then
\begin{align*}
0
&\le
\left(
\begin{pmatrix}
S & R^*\\
R & T
\end{pmatrix}
\begin{pmatrix}
x\\
y
\end{pmatrix}
\left|
\begin{pmatrix}
x\\
y
\end{pmatrix}
\right.\right)
=((S-R^*T^{-1}R)x\mid x),
\end{align*}
and this proves the assertion.
\end{proof}
%%%%%%%%%%%%%%-BEGIN NEW-%%%%%%%%%%%%%%
Since $S^0_\lambda$ is positive, $R^1_\lambda$ and $R^2_\lambda$ are positive as well.
This implies, in particular, that $(R^2_\lambda +\varepsilon)$ is positive definite for any 
$\varepsilon>0$. Thus \eqref{matrix} is equivalent to the equation
\begin{equation}
\label{new}
\lambda h + (R^1_\lambda -R^*_\lambda(R^2_\lambda+\frac{1}{\gamma_*})^{-1}R_\lambda)\sigma_* \cA_\Sigma h=0.
\end{equation}
We first show that
\begin{equation*}
R^1_\lambda -R^*_\lambda(R^2_\lambda+\frac{1}{\gamma_*})^{-1}R_\lambda: L_{2,0}(\Sigma)\to L_{2,0}(\Sigma)
\end{equation*}
is injective.
Let
\begin{equation*}
\label{matrix-notation}
\begin{pmatrix}
S & R^*\\
R & T
\end{pmatrix}
:=\begin{pmatrix}
R^1_\lambda  & R^*_\lambda\\
  R_\lambda  & R^2_\lambda +\frac{1}{2\gamma_*}
\end{pmatrix}
\end{equation*}
and observe that the assertion of
 Lemma~\ref{Schur} holds true for this matrix with $H=L_{2,0}(\Sigma)$.
Suppose
$(R^1_\lambda -R^*_\lambda(R^2_\lambda+\frac{1}{\gamma_*})^{-1}R_\lambda)h=0$ for some $h\in L_{2,0}(\Sigma)$.
Then
\begin{equation*}
\begin{aligned}
0&=((R^1_\lambda -R^*_\lambda(R^2_\lambda+\frac{1}{\gamma_*})^{-1}R_\lambda)h|h)\\
 &=((S-R^*T^{-1}R)h|h) 
+\frac{1}{2\gamma_*}( T^{-1} (T+\frac{1}{2\gamma_*})^{-1}Rh|Rh)\ge c|Rh|^2\ge 0.
\end{aligned}
\end{equation*}
Thus, $ Rh=0$, and then also $Sh=0$.
(That is, $R^1_\lambda h=0$).
This implies
\begin{equation}
\label{scalar-prod}
0= (S_\lambda^0(h,0)|(h,0))= ((I-K)S_\lambda^{1/2}(h,0,0)|\S_\lambda^{1/2}(h,0,0)).
\end{equation}
We conclude that
$(I-K)S_\lambda^{1/2} (h,0,0)=(0,0,0)$, and Lemma~\ref{pos-def} then yields 
$S_\lambda^{1/2}(h,0,0)=(0,0,0)$. Therefore, $S_\lambda (h,0,0)=(0,0,0)$, and in particular $S_\lambda^{11} h =0$.
We can now, at last, infer from Proposition~\ref{as-Stokes}(v) that $h=0$. 

\smallskip
An analogous argument shows that $R^1_\lambda: L_{2,0}(\Sigma)\to L_{2,0}(\Sigma)$ is injective as well.
Indeed, if $R^1_\lambda h=0$ for some $h\in L_{2,0}(\Sigma)$, then  \eqref{scalar-prod} holds, and the proof proceeds 
just as above.
We note that $R^1_\lambda$ admits a representation
$$R^1_\lambda=S^{11}_\lambda(I-C_\lambda)$$ 
on $L_{2,0}(\Sigma)$,
with $C_\lambda$ a compact operator.
Since $R^1_\lambda$ %and $S^{11}_\lambda$ both are injective%
is injective on $L_{2,0}(\Sigma)$, $(I-C_\lambda)$ must be so as well.
Consequently, $(I-C_\lambda)$ is a bijection as it has Fredholm index zero.
Proposition~\ref{as-Stokes}(v) then implies
$$
R^1_\lambda \in{\rm Isom}(L_{2,0}(\Sigma),L_{2,0}(\Sigma)\cap H^1_2(\Sigma)),
$$ 
i.e., $R^1_\lambda$ is an isomorphism
between the indicated spaces.

\smallskip\noindent
A similar argument now shows that
\begin{equation*}
R^1_\lambda -R^*_\lambda(R^2_\lambda+\frac{1}{\gamma_*})^{-1}R_\lambda
\in{\rm Isom}(L_{2,0}(\Sigma),L_{2,0}(\Sigma)\cap H^1_2(\Sigma)).
\end{equation*}
Setting $T_\lambda:=[R^1_\lambda -R^*_\lambda(R^2_\lambda+\frac{1}{\gamma_*})^{-1}R_\lambda]^{-1}$,
 %%%%%%%%%%%%%%-END NEW-%%%%%%%%%%%%%% 
equation \eqref{new} can be written as
$$
\lambda T_\lambda h + \sigma_* \cA_\Sigma h=0.
$$
This equation can be treated in the same way as in \cite{PSW13}. As a conclusion,
$$
B_\lambda:=\lambda T_\lambda + \sigma_* \cA_\Sigma 
$$ 
has a nontrivial kernel for some $\lambda_0>0$,
which implies that $-L$ has a positive eigenvalue. Even more is true.
We have seen that $B_\lambda$ is positive definite for large $\lambda$ and
$B_0=\sigma_* \cA_\Sigma$ has $-\sigma (n-1)/R_*^2$ as an eigenvalue of multiplicity $m-1$ in $L_{2,0}(\Sigma)$.
Therefore, as $\lambda$ increases to infinity, $m-1$ eigenvalues $\mu_k(\lambda)$ of $B_\lambda$
must cross through zero, this way inducing $m-1$ positive eigenvalues of $-L$.

\medskip

\noindent
{\bf 8.} \, Next we look at the eigenvalue $\lambda=0$. Then \eqref{evid} yields
$$
2|\mu_*^{1/2}D(u)|_2^2+\gamma_*|j_\Sigma|_\Sigma^2
+ \theta_*(|d_*^{1/2}\nabla\vartheta|_2^2+|d_{\Gamma_*}^{1/2}\nabla_\Sigma\vartheta_\Sigma|_\Sigma^2)=0,
$$
hence  $\vartheta$ is constant, $D(u)=0$ and $j_\Sigma=0$ by the flux condition for $\vartheta$.
This further implies that $[\![u]\!]=0$, and therefore Korn's inequality yields $\nabla u=0$
and then we have $u=0$ by the no-slip condition on $\partial \Omega$.
This implies further that the pressures are constant in the phases and 
$[\![\pi]\!]=-\sigma_* \cA_\Sigma h +\theta_*\sigma'_*H_*\vartheta_\Sigma,$
as well as $[\![\pi/\rho]\!]=-l_*\vartheta$.
Thus the dimension of the eigenspace for eigenvalue $\lambda=0$ is the same as the dimension of the manifold of equilibria,
namely $mn +2$ if $\Omega_1$ has $m\geq1$ components.
We set $\Sigma=\cup_{1\le k\le m}\Sigma_k$. Hence, $\Sigma$ consists of $m$ spheres $\Sigma_k$, $k=1,\ldots, m$, of equal radius with
$\overline{\Sigma_k}\cap \overline{\Sigma_j}=\emptyset$, $l\ne j$, $\overline{\Sigma_k}\subset \Omega$, $k=1,\ldots, m$.
The kernel of $L$ is spanned by $e_{\vartheta,\vartheta_\Sigma}=(0,1,1,0)$, $e_h=(0,0,0,1)$, $e_{ik}=(0,0,0,Y_k^i)$
with the spherical harmonics $Y_k^i$ of degree one for the spheres $\Sigma_k$, $i=1,\ldots,n$, $k=1,\ldots, m$.

To show that the equilibria are normally stable or normally hyperbolic,
it remains to prove that $\lambda=0$ is semi-simple.
So suppose we have a solution of $L(u,\vartheta,h)= \sum_{i,k}\alpha_{ik}e_{ik} + 
\beta e_{\theta,\theta_\Sigma} +\delta e_h$. This means
%%%%%%%%%%%%
\begin{equation}
\hspace{-1mm}
%\left\{
\begin{aligned}
\label{ev-St}
-\mu_* \Delta u +\nabla\pi &=0 &&\mbox{in }\; \Omega\setminus\Sigma,\\
{\rm div}\, u &=0 &&\mbox{in }\;\Omega\setminus\Sigma,\\
P_\Sigma[\![u]\!] &=0 &&\mbox{on } \Sigma,\\
\!\!-[\![ T(u,\pi,\theta_*)\nu_\Sigma]\!]  + \sigma_* \cA_\Sigma h \nu_\Sigma
\!-\!\theta_*{\sigma^\prime_*}H_*\vartheta_\Sigma \nu_\Sigma\!-\!\theta_*{\sigma'_*}\nabla_\Sigma \vartheta_\Sigma &=0 &&
\mbox{in }\;\Sigma,\\
u&=0 &&\mbox{on } \partial\Omega.
\end{aligned}
%\right.
\end{equation}
\\
%%%%%%%%%%%%%%%%%%%%
\begin{equation}
%\left\{
\begin{aligned}\label{ev-Ht}
 -d_*\Delta \vartheta &=\rho\kappa_*\beta &&\mbox{in }\;\Omega\setminus\Sigma,\\
[\![\vartheta]\!]&=0 && \mbox{on }\Sigma,\\
- d_{\Gamma_*}\Delta_\Sigma\vartheta_\Sigma
-(l_*/\theta_*)j_\Sigma - [\![d_*\partial_{\nu_\Sigma}\vartheta]\!] - \sigma'_* \text{div}_\Sigma u_\Sigma
&= \kappa_{\Gamma_*}\beta &&\mbox{on }\Sigma,\\
\partial_\nu\vartheta &=0 &&\mbox{on } \partial\Omega.
\end{aligned}
%\right.
\end{equation}
\\
%%%%%%%%%%%%%%%%%%%%%%
\begin{equation}
\hspace{6mm}
%\left\{
\begin{aligned}\label{ev-j}
-[\![T(u,\pi,\theta_*)\nu_\Sigma\cdot\nu_\Sigma/\rho]\!]
+l_*\vartheta_\Sigma +\delta_* j_\Sigma &= 0 &&\mbox{on } \Sigma,\\
 -[\![\rho u\cdot \nu_\Sigma]\!]/[\![\rho]\!] &= \sum_{i,k} \alpha_{ik}Y_k^i +\delta &&\mbox{on } \Sigma.
\end{aligned}
%\right.
\end{equation}
%%%%%%%%%%%%%%%%%%
We have to show $\alpha_{ik}=\beta=\delta=0$ for all $i,k$.
By ${\rm div}\, u=0$, we have
$$
\delta|\Sigma|=\delta|\Sigma|+\sum_{i,k}\alpha_{ik}\int_\Sigma Y_k^i\,d\Sigma =
-\int _\Sigma [\![\rho u\cdot \nu_\Sigma]\!]/[\![\rho]\!]\,d\Sigma=0, 
%&= 1/\lj \rho\rj \int_\Omega \rho {\rm div}\, u\,dx=0,
$$
which implies $\delta=0$. Since $\int_\Sigma j_\Sigma\,d\Sigma =0$
%$$
%\int_\Sigma j\,d\Sigma=1/\lj \rho\rj \int_\Sigma \lj u\cdot\nu_\Sigma\rj \,d\Sigma=
%-1/\lj \rho\rj \int_\Omega {\rm div}\, u\,dx=0,
%$$
we have
\begin{align*}
\beta[(\rho\kappa_*|1)_\Omega+\kappa_{\Gamma_*}|\Sigma|]
&=-{\sigma^\prime_*}\int_\Sigma{\rm div}_\Sigma\,u_\Sigma\,d\Sigma={\sigma^\prime_*}H_*\int_\Sigma u_\Sigma\cdot\nu_\Sigma\,d\Sigma\\
&={\sigma^\prime_*}H_*\int_\Sigma  [\![\rho u\cdot \nu_\Sigma]\!]/[\![\rho]\!]\,d\Sigma =0,
\end{align*}
which implies $\beta=0$.
\par
Integrating and adding up the first equation of \eqref{ev-St} multiplied by $u$, the fourth equation of \eqref{ev-St} multiplied by $u_\Sigma$,
the first equation of \eqref{ev-Ht} multiplied by $\theta_*\vartheta$, the third equation of \eqref{ev-Ht} multiplied by $\theta_*\vartheta_\Sigma$,
and the first equation of \eqref{ev-j} multipled by $j_\Sigma$, we obtain
$$
2|\mu_*^{1/2}D(u)|_2^2+\gamma_*|j_\Sigma|_\Sigma^2
+ \theta_*(|d_*^{1/2}\nabla\vartheta|_2^2+|d_{\Gamma_*}^{1/2}\nabla_\Sigma\vartheta_\Sigma|_\Sigma^2)
+\sigma_*(\cA_\Sigma h|\nu_\Sigma\cdot u_\Sigma)_\Sigma=0.
$$
Next we observe that
\begin{equation*}
\begin{aligned}
(\cA_\Sigma h\,|\,\nu_\Sigma\cdot u_\Sigma)_\Sigma
%=(\cA h| \nu_\Sigma\cdot P_\Sigma u +\lj\rho u\cdot \nu_\Sigma]\rj/\lj\rho\rj)
&=(\cA_\Sigma h \,|\, [\![\rho u\cdot \nu_\Sigma]\!]/[\![\rho]\!])_\Sigma 
=-\sum_{i,k} \alpha_{ik}(\cA_\Sigma h \,|\, Y_k^i)_\Sigma=0,
\end{aligned}
\end{equation*}
since $\cA_\Sigma$ is self-adjoint and 
the spherical harmonics $Y_k^i$ are in the kernel of $\cA_\Sigma$.
We can now conclude that $u_\Sigma=u-j_\Sigma\nu_\Sigma/\rho=0$, and hence
$$
0=u_\Sigma\cdot\nu_\Sigma=-\sum_{i,k}\alpha_{ik}Y_k^i.
$$
Thus $\alpha_{ik}=0$ for $1\le i\le n$, $1\le k\le m$, as the spherical harmonics $Y_k^i$ are linearly independent.
Therefore, the eigenvalue $\lambda=0$ is semi-simple.

\medskip
\goodbreak

\noindent
{\bf 9.} \, Let us summarize what we have proved.

\begin{thm}\label{propertiesL} Let $L$ denote the linearization at  $e_*:=(0,\theta_*,\theta_*|_\Sigma,\Sigma)\in\cE$ as defined above.
Then $-L$ generates a compact analytic $C_0$-semigroup in $X_0$ which has maximal $L_p$-regularity. The spectrum of $L$ consists only of eigenvalues of finite algebraic multiplicity. Moreover, the following assertions are valid.
\begin{itemize}
\item[(i)] The operator $-L$ has no eigenvalues $\lambda\neq0$ with nonnegative real part if and only if $\Sigma$ is connected.
\vspace{2mm}
\item[(ii)] If $\Sigma$ is disconnected, then $-L$ has precisely $m-1$ positive eigenvalues.
\vspace{2mm}
\item[(iii)] $\lambda=0$ is an eigenvalue of $L$ and it is semi-simple.
\vspace{2mm}
\item[(iv)] The kernel $N(L)$ of $L$ is isomorphic to the tangent space $T_{e_*}\cE$ of the manifold of equilibria $\cE$ at $e_*$.
\end{itemize}
\noindent
Consequently, $e_*=(0,\theta_*,\theta_*|_\Sigma,\Sigma)\in\cE$ is normally stable if and only if  $\Sigma$ is connected,
and normally hyperbolic if and only if  $\Sigma$ is disconnected.
\end{thm}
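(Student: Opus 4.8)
The plan is to assemble the computations carried out piecewise in parts 2--8 above and read off the spectral picture. The generation of a compact analytic $C_0$-semigroup with maximal $L_p$-regularity, and the fact that $\sigma(L)$ consists of isolated eigenvalues of finite algebraic multiplicity independent of $p$, were already established in part 2, using that $\LL$ is an isomorphism from $\EE$ onto $\FF\times\EE_\gamma$, the standard characterization of generators via maximal regularity, and the compact embedding $X_1\hookrightarrow X_0$. So it remains to prove (i)--(iv) and the concluding dichotomy.

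For (i) I would first invoke part 3, where the two energy identities, combined with the elimination of $\sigma_*(\cA_\Sigma h|h)_\Sigma$, show that any eigenvalue $\lambda$ of $-L$ with ${\rm Re}\,\lambda\ge0$ is real; if $\Sigma$ is connected, part 4 then rules out $\lambda>0$, using $\int_\Sigma h\,d\Sigma=\int_\Sigma j_\Sigma\,d\Sigma=0$ and $\kappa_{\Gamma_*}\int_\Sigma\vartheta_\Sigma\,d\Sigma+\int_\Omega\rho\kappa_*\vartheta\,dx=0$, so that $\cA_\Sigma$ is positive semidefinite on the pertinent subspace and \eqref{evid} forces $(u,\vartheta,h)=(0,0,0)$. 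For the converse, and simultaneously for (ii), I would appeal to part 7: the system \eqref{simul} reduces, after symmetrizing $S_\lambda-\theta_*S_\lambda Q L_\lambda Q^*S_\lambda$ and applying Lemmas~\ref{pos-def} and~\ref{Schur}, to the scalar equation \eqref{new}, i.e.\ to the pencil $B_\lambda=\lambda T_\lambda+\sigma_*\cA_\Sigma$; since $B_0=\sigma_*\cA_\Sigma$ has $-\sigma_*(n-1)/R_*^2$ as an eigenvalue of multiplicity $m-1$ on $L_{2,0}(\Sigma)$ while $B_\lambda$ is positive definite for large $\lambda$, a continuity argument forces precisely $m-1$ eigenvalues of $B_\lambda$ to cross zero, each crossing producing one positive eigenvalue of $-L$; this gives the ``only if'' in (i) and the count in (ii).

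For (iii) and (iv) I would use part 8: setting $\lambda=0$ in \eqref{evid} gives $D(u)=0$, $j_\Sigma=0$ and $\vartheta$ constant, hence $u=0$ by Korn's inequality and the no-slip condition, so $N(L)$ is spanned by $e_{\vartheta,\vartheta_\Sigma}$, $e_h$ and the $e_{ik}$ built from degree-one spherical harmonics, of dimension $mn+2$, which I would match with $T_{e_*}\cE$ by identifying these vectors with the generators of translations of the spheres $\Sigma_k$, of variation of their common radius, and of the temperature shift; semi-simplicity follows from solving $Lz=\sum_{i,k}\alpha_{ik}e_{ik}+\beta e_{\vartheta,\vartheta_\Sigma}+\delta e_h$, where ${\rm div}\,u=0$ gives $\delta=0$, the energy balance for $\vartheta$ gives $\beta=0$, and the combined identity together with $(\cA_\Sigma h|\nu_\Sigma\cdot u_\Sigma)_\Sigma=0$ (spherical harmonics lie in $N(\cA_\Sigma)$) gives $u_\Sigma=0$, hence $\alpha_{ik}=0$. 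Finally, for the dichotomy I would combine: by part 3 the only eigenvalue of $-L$ on $i\RR$ is $0$; if $\Sigma$ is connected then (i) places every other eigenvalue in $\{{\rm Re}\,\lambda<0\}$ and (iii), (iv) make $0$ semi-simple with eigenspace $T_{e_*}\cE$, which is the definition of normal stability; if $\Sigma$ is disconnected, (ii) supplies eigenvalues with positive real part alongside the same semi-simplicity, which is normal hyperbolicity. The step I expect to be the main obstacle is the converse of (i) together with the count in (ii): showing that disconnectedness genuinely forces positive eigenvalues rests on the operator-theoretic reductions of part 7 --- the Schur complement of the symmetrized Stokes--heat operator, the injectivity provided by Lemma~\ref{pos-def}, and the isomorphism property of $S_\lambda^{11},S_\lambda^{22}$ from Proposition~\ref{as-Stokes}(v).
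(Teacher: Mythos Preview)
Your proposal is correct and follows exactly the paper's approach: the theorem is stated in part~9 as a summary of parts~2--8, and you have correctly identified which piece of the preceding analysis supplies each assertion (generation and spectral structure from part~2, reality of eigenvalues from part~3, absence of positive eigenvalues in the connected case from part~4, existence and count of positive eigenvalues in the disconnected case from parts~5--7 via the pencil $B_\lambda$, and the kernel description with semi-simplicity from part~8). The paper itself offers no further argument beyond ``Let us summarize what we have proved,'' so your assembly is in fact more explicit than the paper's own presentation.
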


%%%%%%%%%%%%%%%%%%%%%%%%%%%%%%%%%%%%%%%%%%%%%%%%%%%%%%%%%%%%%%%%
\section{Nonlinear Stability of Equilibria}

\noindent
{\bf 1.} \, We look at Problem \eqref{bulk}, \eqref{interface} in the neighborhood of a non-degenerate equilibrium
$e_*=(0,\theta_*,\Gamma_*)\in\cE$. Employing a Hanzawa transform with reference manifold
$\Sigma=\Gamma_*$ as in \cite[Section 3]{PSW13}, the transformed problem becomes
%%%%%%%%%%
\begin{equation}
%\left\{
\begin{aligned}\label{enonlin-u}
\rho\partial_t u-\mu_* \Delta u +\nabla\pi &=F_u(u,\pi,\vartheta,h),\\
{\rm div}\, u &=G_d(u,h) \\
P_\Sigma[\![u]\!] &= G_u(u,h)\\
-2P_\Sigma[\![\mu_*D(u)\nu_\Sigma]\!] -\theta_*\sigma'_*\nabla_\Sigma\vartheta_\Sigma
&= G_\tau(u,\vartheta,\vartheta_\Sigma,h),\\
 -2 [\![\mu_* D(u) \nu_\Sigma\cdot\nu_\Sigma]\!] +\lj\pi\rj
 +\sigma_* \cA_\Sigma h -\theta_*\sigma'_* H_* \vartheta_\Sigma
 &=G_\nu(u,\vartheta,h)+G_\gamma ,\\
%G_\gamma(\vartheta_\Sigma,h)
-2 [\![\mu_* D(u) \nu_\Sigma\cdot\nu_\Sigma/\rho]\!] +\lj\pi/\rho\rj +l_*\vartheta_\Sigma +\gamma_*j_\Sigma
&= G_h(u,\vartheta,h),\\
u&=0,\\
u(0)&=u_0, 
\end{aligned}
%\right.
\end{equation}
\\
%%%%%%%%%%
with $G_\gamma=G_\gamma(\vartheta_\Sigma,h)$, where $\mu_*=\mu(\theta_*)$, $\sigma_*=\sigma(\theta_*)$, $\sigma'_*=\sigma'(\theta_*)$, $l_*=l(\theta_*)$, $\gamma_*=\gamma(\theta_*)$, and
$$\cA_\Sigma =-H^\prime(0)=-(n-1)/R_*^2 -\Delta_{\Sigma}.$$
For the relative temperature $\vartheta=(\theta-\theta_*)/\theta_*$ we obtain
%%%
\begin{equation}
%\left\{
\begin{aligned}\label{enonlin-theta}
\rho\kappa_*\partial_t\vartheta -d_*\Delta \vartheta &=F_\theta(u,\vartheta,h)
&&\mbox{in }\; \Omega\setminus{\Sigma},\\
\!\kappa_{\Gamma_*}\pd_t\vartheta_\Sigma -d_{\Gamma_*}\Delta_\Sigma\vartheta_\Sigma
-(l_*/\theta_*)j_\Sigma -\\
-[\![d_*\partial_{\nu_\Sigma}\vartheta]\!] - {\sigma'_*} \text{div}_\Sigma u_\Sigma 
&=G_\theta(u,\vartheta,\vartheta_\Sigma,h ) &&\mbox{on } {\Sigma},\\
[\![\vartheta]\!]&=0 && \mbox{on }{\Sigma},\\
\partial_\nu\vartheta&=0 && \mbox{on }\partial\Omega,\\
\vartheta(0)&=\vartheta_0 &&\mbox{in }\; \Omega,
\end{aligned}
%\right.
\end{equation}
with $\kappa_*=\kappa(\theta_*)$, $d_*=d(\theta_*)$, $\kappa_{\Gamma*}=\kappa_\Gamma(\theta_*)$,
$d_{\Gamma*}=d_\Gamma(\theta_*)$. Finally, the evolution of $h$ is determined by
%%%
\begin{equation}
%\left\{
\begin{aligned}\label{enonlin-h}
\partial_t h -[\![\rho u\cdot\nu_\Sigma]\!]/[\![\rho]\!]&=F_h(u,h) &&\mbox{ on } {\Sigma},\\
h(0)&=h_0.&&
\end{aligned}
%\right.
\end{equation}
Here
$F_u$, $G_d$, $G_u$, $G_\nu$, $F_\theta$, $F_h$ are the same as in the case $\sigma$ where is constant;
see \cite[Section 3]{PSW13},
and
\begin{align*}
G_\tau(u,\vartheta,\vartheta_\Sigma, h)  &=  2 P_\Sigma[\![(\mu(\theta)-\mu(\theta_*))D(u)\nu_\Sigma]\!]
- 2P_\Sigma [\![\mu(\theta)D(u)M_0(h)\nabla_\Sigma h]\!]\\
&-P_\Sigma [\![\mu(\theta)(M_1(h)\nabla u+[M_1(h)\nabla u]^{\sf T})
(\nu_\Sigma-M_0(h)\nabla_\Sigma h)]\!]\\
&+[\![\mu(\theta)((I-M_1)\nabla u+[(I-M_1)\nabla u]^T)\\
&\phantom{+[\![\mu(\theta)((I-M_1)\nabla u}
(\nu_\Sigma-M_0\nabla_\Sigma h)\cdot\nu_\Sigma]\!]M_0(h)\nabla_\Sigma h\\
& -\theta_*(\sigma^\prime_*-(\sigma^\prime(\theta_\Sigma)/\beta(h))\nabla_\Sigma \vartheta_\Sigma\\ 
& -\theta_*(\sigma^\prime(\theta_\Sigma)/\beta(h))P_\Sigma(I-P_\Gamma(h) M_0(h))\nabla_\Sigma \vartheta_\Sigma\\
&+(\sigma^\prime_*(\theta_\Sigma)/\beta(h))(P_\Gamma(h) M_0(h)\nabla_\Sigma\vartheta_\Sigma\cdot \nu_\Sigma)M_0(h)\nabla_\Sigma h,\\
% &-\theta_*\sigma'(\theta_\Sigma)(I-P_\Gamma(h)M_0(h))\nabla_\Sigma\vartheta_\Sigma,\\
\\
%%%%%%%%%%%%%%%
G_\gamma(\vartheta_\Sigma, h)
& =\sigma(\theta_\Sigma)H(h)-\sigma(\theta_*)H^\prime(0)h-\theta_*\sigma^\prime_* H_*\vartheta_\Sigma\\
& +\theta_*\sigma^\prime(\theta_\Sigma)/\beta(h)(P_\Gamma(h) M_0(h)\nabla_\Sigma\vartheta_\Sigma\cdot M_0(h)\nabla_\Sigma h),\\
\\
%%%%%%%%%%%%%%%%
G_\theta(u,\vartheta,\vartheta_\Sigma, h) & =
 (\kappa_{\Gamma_*}-\kappa_\Gamma(\theta_\Sigma))\pd_t\vartheta_\Sigma
-\kappa_\Gamma(\theta_\Sigma)u_\Sigma\cdot P_\Gamma(h)M_0(h)\nabla_\Sigma\vartheta_\Sigma\\
& -(d_{\Gamma_*} - d_\Gamma(\theta_\Sigma))\Delta_\Sigma \vartheta_\Sigma\\
& -(d_{\Gamma}(\theta_\Sigma)\Delta_\Sigma \vartheta_\Sigma - {\rm tr}\{P_\Gamma(h)M_0(h)\nabla_\Sigma\\
&\hspace{4cm}
(d_\Gamma(\theta_\Sigma)P_\Gamma(h)M_0(h)\nabla_\Sigma \vartheta_\Sigma)\})\\
& - \lj d_*\pd \nu_\Sigma\vartheta\rj + (\lj d(\theta)(I-M_1(h))\nabla_\Sigma\vartheta_\Sigma\cdot \nu_\Gamma\rj\\
&-(\sigma^\prime_* {\rm div}\,u_\Sigma-
(\theta_\Sigma/\theta_*)\sigma^\prime(\theta_\Sigma){\rm tr}\{P_\Gamma(h) M_0(h)\nabla_\Sigma u_\Sigma\})\\
& -(1/\theta_*)(l(\theta_*) j_\Sigma -l(\theta_\Sigma )j_\Gamma)+(\gamma(\theta_\Sigma)/\theta_*)j_\Gamma^2, \\
%+ l_\gamma(\theta_\Sigma) {\rm tr} (P_\Gamma(h)M_0(h)\nabla_\Sigma u_\Sigma)
%%%%%%%%%%%%%%
%\end{align*}
%\begin{align*}
\\
G_h(u,\vartheta,h) & =
2\lj (\mu(\theta) -\mu(\theta_*))D(u)\nu_\Sigma\cdot\nu_\Sigma/\rho\rj\\
& -2( \lj \mu(\theta) D(u)\nu_\Sigma\cdot\nu_\Sigma/\rho \rj 
-\lj \mu(\theta)D(u)\nu_\Gamma\cdot\nu_\Gamma /\rho\rj )\\
& -\lj \mu(\theta)(M_1(h) + [M_1(h)]^{\sf T})\nu_\Gamma\cdot\nu_\Gamma/\rho\rj \\  
& -(\lj \psi(\theta)\rj +\theta_* \lj \psi^\prime (\theta_*)\rj -\theta_*\lj \psi^\prime(\theta_*)\rj \vartheta_\Sigma)\\
& -(\gamma(\theta)j_\Gamma - \gamma(\theta_*)j_\Sigma) - \lj (1/2\rho^2)\rj j_\Gamma^2,
%%%%%%%%%%%
\end{align*}
cf. \cite[Section 2]{PSW13} for the definition of $M_0(h)$, $M_1(h)$, $P_\Gamma(h)$, and $\beta(h)$.\\
Moreover, 
$$\nu_\Gamma=\beta(h)(\nu_\Sigma-M_0(h)\nabla_\Sigma h),
\quad j_\Gamma= \lj u\cdot \nu_\Gamma\rj /\lj 1/\rho\rj,
\quad j_\Sigma= \lj u\cdot \nu_\Sigma\rj /\lj 1/\rho\rj.
$$
The nonlinearities are  $C^1$ from $\EE$ to $\FF$, satisfying
$(F_i^\prime(0),G_k^\prime(0))=(0,0)$ 
for all
$i\in\{u,\theta,h\}$ and $k\in\{d,u,\tau,\nu,\gamma,h,\theta\}$.
\\
\noindent
The state manifold locally near the equilibrium $e_*=(0,\theta_*,\Gamma_*)$  reads as
\begin{align}\label{phasemanifeq}
\cSM:=\Big\{& (u,\vartheta, h)\in L_p(\Omega)^{n+1}\times  C^2(\Sigma):\\
&(u,\vartheta)\in W^{2-2/p}_p(\Omega\setminus\Sigma)^{n+1},\
 h\in W^{3-2/p}_p(\Sigma),\nonumber\\
 &{\rm div}\, u=G_d(u,h) \mbox{ in } \Omega\setminus\Sigma,\quad u=0,\;\partial_\nu\theta =0 \mbox{ on } \partial\Omega,\nn\\
 & P_\Sigma[\![u]\!] = G_u(u,\vartheta,h),\;\lj \vartheta \rj =0\mbox{ on }\Sigma,\nonumber\\
 & -2 P_\Sigma[\![\mu_*D(u)\nu_\Sigma]\!] - \theta_*{\sigma^\prime_*}\nabla_\Sigma\vartheta
 =G_\tau(u,\vartheta,\vartheta_\Sigma, h) \mbox{ on }\Sigma\Big\}.\nonumber
\end{align}
Note that only $(u,\vartheta,h)$ need to be considered as state manifold for the flows, as $\vartheta_\Sigma$ is the
trace of $\vartheta$, and the pressure $\pi$ can be recovered from the flow variables, as in \cite{PSW13}.
Due to the compatibility conditions this is a nonlinear manifold.
By parameterizing this manifold over its tangent space
\begin{align*}
\cSX:=\Big\{ &(u,\vartheta, h)\in L_p(\Omega)^{n+1}\times C^2(\Sigma):\\
 &(u,\vartheta)\in W^{2-2/p}_p(\Omega\setminus\Sigma)^{n+1},\
 h\in W^{3-2/p}_p(\Sigma),\nonumber\\
 &{\rm div}\, u=0\mbox{ in }\Omega\setminus\Sigma,\quad u=0,\;\partial_\nu\theta =0 \mbox{ on } \partial\Omega,\\
 & P_\Sigma[\![u]\!] = 0,\; \lj \vartheta \rj =0,\; %\mbox{ on }\; \Sigma,\nonumber\\
  -2 P_\Sigma[\![\mu_* D(u)\nu_\Sigma]\!] - \theta_*{\sigma^\prime_*}\nabla_\Sigma\vartheta
 =0 \mbox{ on }\Sigma\Big\}, \nonumber
\end{align*}
the nonlinear problem \eqref{enonlin-u}, \eqref{enonlin-theta}, \eqref{enonlin-h} is written in the form studied in \cite{PSW13},
where the generalized principle of linear stability is proved for the problem with constant surface tension.
An adaption of this proof implies the following result.

\begin{thm} \label{stability} Let $p>n+2$, $\rho_1,\rho_2>0$, $\rho_1\neq\rho_2$,  and  suppose
the assumptions {\rm{\bf a), b), c)}} in Section 1.
Then in the topology of the state manifold $\cSM$ we have:
\begin{itemize}
\item[(i)]  $(0,\theta_*,\Gamma_*)\in\cE$ is stable if and only if $\Gamma_*$ is connected.\\
\vspace{-2mm}
\item[(ii)]  Any solution starting in a neighborhood of a stable equilibrium exists globally and converges to
a possibly different stable equilibrium in the topology of $\cSM$.\\
\vspace{-2mm}
\item[(iii)] Any solution starting and {\em staying in} a neighborhood of an unstable equilibrium exists globally
and converges to a possibly different unstable equilibrium in the topology of $\cSM$.
\end{itemize}
\end{thm}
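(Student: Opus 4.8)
\noindent
The plan is to deduce all three assertions from the \emph{generalized principle of linearized stability} (in the form used in \cite{PSW13,PSZ12}), feeding it the spectral information of Theorem~\ref{propertiesL} together with the conservation of total mass and total energy established in Section~2. The negative total entropy provides, in addition, a strict Lyapunov functional, so the abstract convergence result is applicable.

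\noindent
First I would record that, after the Hanzawa transform with reference manifold $\Sigma=\Gamma_*$ and after parameterizing the state manifold $\cSM$ over its tangent space $\cSX$, the nonlinear problem \eqref{enonlin-u}--\eqref{enonlin-h} takes the abstract form $\dot z+Lz=N(z)$, $z(0)=z_0$, in $X_0$, with $N$ a $C^1$-map from the (time-weighted) solution space $\EE_\mu$ into the data space $\FF$ satisfying $N(0)=0$ and $N^\prime(0)=0$. By Theorem~\ref{propertiesL} the operator $-L$ has maximal $L_p$-regularity and generates a compact analytic $C_0$-semigroup on $X_0$, so the abstract framework of \cite{PSZ12} applies. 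The required $C^1$-property with vanishing derivative at $0$ is exactly the assertion, stated just before \eqref{phasemanifeq}, that $F_i,G_k$ are $C^1$ from $\EE$ to $\FF$ with $(F_i^\prime(0),G_k^\prime(0))=(0,0)$; for the new terms $G_\tau$, $G_\gamma$, $G_\theta$, $G_h$ this is verified by the same product, composition and trace estimates as in \cite{PSW13}, using $p>n+2$ and the embeddings $\EE_{\mu,u,\theta}\hookrightarrow C(J;C^1(\bar\Omega_i))^{n+1}$, $\EE_{\mu,h}\hookrightarrow C(J;C^{2+\alpha}(\Sigma))\cap C^1(J;C^1(\Sigma))$ — these nonlinearities are polynomial in $u,\nabla u,\vartheta,\vartheta_\Sigma,\nabla_\Sigma\vartheta_\Sigma,h,\nabla_\Sigma h$ with smooth coefficients, so no new phenomenon arises.

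\noindent
Next I would identify the equilibria near $e_*$. By Section~2 the set $\cE$ is an $(mn+2)$-dimensional $C^1$-manifold; total mass and total energy are conserved along the flow and fix the radius $R_*$ and the temperature $\theta_*$, leaving the centers of the $m$ spheres (and, if $m\ge2$, the distribution of the enclosed volume among the components) as the remaining parameters. Theorem~\ref{propertiesL}(iii)--(iv) gives that $\lambda=0$ is semisimple and $N(L)=T_{e_*}\cE$, hence $X_0=N(L)\oplus R(L)$ with the associated spectral projection. In the connected case Theorem~\ref{propertiesL}(i) shows $\sigma(L)\setminus\{0\}\subset\{\lambda\in\C:{\rm Re}\,\lambda>0\}$, i.e.\ $e_*$ is normally stable; the abstract theorem then yields stability of $e_*$ in the norm of $\cSM$ and convergence of every solution starting nearby to some (in general different) equilibrium in $\cSM$, which proves the ``if'' direction of (i) and all of (ii). In the disconnected case Theorem~\ref{propertiesL}(ii) gives exactly $m-1$ eigenvalues of $-L$ in the open right half-plane and no nonzero purely imaginary ones, i.e.\ $e_*$ is normally hyperbolic; the abstract result then provides a local stable manifold of codimension $m-1$ in $\cSM$, transverse to $\cE$, and gives statement (iii) for solutions that remain in a small neighborhood, while the nontrivial unstable directions force instability of $e_*$, hence the ``only if'' direction of (i). Transferring everything back through the inverse Hanzawa transform, and invoking the instantaneous regularization from Corollary~\ref{wellposed3} to upgrade the convergence to the topology of the state manifold, completes the argument.

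\noindent
The hard part will be the bookkeeping of the variable-surface-tension structure: (a) checking that the Marangoni stress $\theta_*\sigma^\prime_*\nabla_\Sigma\vartheta_\Sigma$, which now enters the nonlinear compatibility relation defining $\cSM$, still allows $\cSM$ to be parameterized smoothly over $\cSX$; (b) verifying that the genuinely parabolic surface-temperature equation for $\vartheta_\Sigma$, coupled to the bulk through $[\![d_*\partial_{\nu_\Sigma}\vartheta]\!]$ and to the velocity through ${\rm div}_\Sigma u_\Sigma$ and the kinetic-undercooling term $\gamma_* j_\Sigma$, does not destroy the $C^1$-dependence of $N$ on $\EE_\mu$; and (c) re-reading the proof of \cite{PSW13} to confirm that every place where the constancy of $\sigma$ and $\gamma\equiv0$ was used can be replaced by the corresponding estimate with $\sigma^\prime_*\ne0$, $\gamma_*\ge0$. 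Items (a) and (b) are handled by the estimates indicated above together with the implicit function theorem applied to the compatibility conditions; item (c) is routine but lengthy. Once these are settled, the abstract machinery applies essentially verbatim.
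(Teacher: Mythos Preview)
Your proposal is correct and follows essentially the same approach as the paper: the paper's own argument for this theorem is simply the one-line remark that, after parameterizing $\cSM$ over $\cSX$, the problem takes the form studied in \cite{PSW13} and ``an adaption of this proof implies the following result.'' You have fleshed out precisely what that adaptation entails---invoking the generalized principle of linearized stability with the spectral input from Theorem~\ref{propertiesL}, checking the $C^1$-smallness of the new nonlinearities $G_\tau,G_\gamma,G_\theta,G_h$, and handling the modified compatibility condition on $\cSM$---which is exactly the intended route.
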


\section{Qualitative Behaviour of the Semiflow} %2 pages

In this section we study the global properties of problem \eqref{bulk}, \eqref{interface},
following the approach of \cite[Section 6]{PSW13}.

Recall that the closed $C^2$-hypersurfaces contained in $\Omega$ form a $C^2$-manifold,
which we denote by $\cMH^2(\Omega)$ as in \cite[Section 2]{PSW13}.
The charts are the parameterizations over a given hypersurface $\Sigma$, and the tangent
space consists of the normal vector fields on $\Sigma$.
We define a metric on $\cMH^2(\Omega)$ by means of
$$d_{\cMH^2}(\Sigma_1,\Sigma_2):= d_H(\cN^2\Sigma_1,\cN^2\Sigma_2),$$
where $d_H$ denotes the Hausdorff metric on the compact subsets of $\R^n$.
This way $\cMH^2(\Omega)$ becomes a Banach manifold of class $C^2$.

Let $d_\Sigma(x)$ denote the signed distance for $\Sigma$.%% as in Section 3.
We may then define the {\em level function} $\varphi_\Sigma$ by means of
$$\varphi_\Sigma(x) = \phi(d_\Sigma(x)),\quad x\in\R^n,$$
where
$$\phi(s)=s(1-\chi(s/a))+{\rm sgn}\, s \chi(s/a),\quad s\in \R.$$
It is easy to see that $\Sigma=\varphi_\Sigma^{-1}(0)$, and $\nabla \varphi_\Sigma(x)=\nu_\Sigma(x)$, for each $x\in \Sigma$.
Moreover, $\kappa=0$ is an eigenvalue of $\nabla^2\varphi_\Sigma(x)$,
the remaining eigenvalues of $\nabla^2\varphi_\Sigma(x)$ are the principal curvatures $\kappa_j$ of $\Sigma$ at $x\in\Sigma$.

If we consider the subset $\cMH^2(\Omega,r)$ of $\cMH^2(\Omega)$ which consists of all closed hyper-surfaces
$\Gamma\in \cMH^2(\Omega)$ such that $\Gamma\subset \Omega$ satisfies the ball condition
with fixed radius $r>0$ then the map $\Phi:\cMH^2(\Omega,r)\to C^2(\bar{\Omega})$ defined by
$\Phi(\Gamma)=\varphi_\Gamma$ is an isomorphism of
the metric space $\cMH^2(\Omega,r)$ onto $\Phi(\cMH^2(\Omega,r))\subset C^2(\bar{\Omega})$.

Let $s-(n-1)/p>2$; for $\Gamma\in\cMH^2(\Omega,r)$, we define $\Gamma\in W^s_p(\Omega,r)$
if $\varphi_\Gamma\in W^s_p(\Omega)$. In this case the local charts for $\Gamma$ can be chosen of class $W^s_p$ as well.
A subset $A\subset W^s_p(\Omega,r)$ is  (relatively) compact, if and only if  $\Phi(A)\subset W^s_p(\Omega)$ is (relatively) compact.

As an ambient space for the
{\bf state manifold }$\cSM_\Gamma$ of Problem  (\ref{NS}), (\ref{Heat}), (\ref{GTS})  we consider
the product space $L_p({\Omega})^{n+1}\times \cMH^2(\Omega)$ and set

\begin{align}\label{phasemanigamma}
\cSM_\Gamma:=\Big\{ &(u,\theta, \Gamma)\in L_p(\Omega)^{n+1}\times \cMH^2(\Omega): \nonumber\\
& (u,\theta)\in W^{2-2/p}_p(\Omega\setminus\Gamma)^{n+1},\ \ 
0<\theta<\theta_c,\ \ \Gamma\in W^{3-2/p}_p,\nonumber\\
&{\rm div}\, u=0\,\mbox{ in } \Omega\setminus\Gamma,\quad u=\partial_\nu\theta =0 \,\mbox{ on } \partial\Omega,\nn\\
& P_\Gamma[\![u]\!] = 0,\; \lj \theta \rj =0
\, \mbox{ on }\ \Gamma,
\nonumber\\ & 
2 P_\Gamma\lj \mu(\theta) D(u)\nu_\Gamma \rj + \sigma'(\theta)\nabla_\Gamma\theta
=0 \mbox{ on }\Gamma\Big\}.\nonumber
\end{align}
\noindent
Charts for these manifolds are obtained by the charts induced by $\cMH^2(\Omega)$,
followed by a Hanzawa transformation.

Applying Theorem \ref{wellposed} and re-parameterizing the interface repeatedly,
we see that (\ref{NS}), (\ref{Heat}), (\ref{GTS}) yields a local semiflow on $\cSM_\Gamma$.

\begin{thm}\label{semiflow} Let $p>n+2$, $\sigma,\rho_1,\rho_2>0$, $\rho_1\neq\rho_2$, and suppose
the assumptions {\rm{\bf a), b), c)}} in Section 1. \par
Then problem \eqref{bulk}, \eqref{interface} generates a local semiflow
on the state manifold $\cSM_\Gamma$. Each solution $(u,\theta,\Gamma)$ of the problem exists on a maximal time
interval $[0,t_*)$, where $t_*=t_*(u_0,\theta_0,\Gamma_0)$.
\end{thm}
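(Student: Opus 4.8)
The plan is to deduce the semiflow property from the local well-posedness results of Section 3, following the approach of \cite[Section 6]{PSW13}.

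First I would observe that an element $(u_0,\theta_0,\Gamma_0)\in\cSM_\Gamma$ satisfies exactly the hypotheses of Theorem \ref{wellposed}: the regularity $(u_0,\theta_0)\in W^{2-2/p}_p(\Omega\setminus\Gamma_0)^{n+1}$ and $\Gamma_0\in W^{3-2/p}_p$ is built into \eqref{phasemanigamma}, condition {\bf b)} holds because $0<\theta_0<\theta_c$ and the coefficients are assumed positive throughout $(0,\theta_c)$, and the compatibility conditions {\bf c)} are precisely the constraints listed in the definition of $\cSM_\Gamma$. Hence Theorem \ref{wellposed} yields a unique $L_p$-solution $(u,\pi,\theta,\theta_\Gamma,\Gamma)$ on a nontrivial interval $[0,\tau]$. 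Using the embeddings recorded after Theorem \ref{wellposed} (in particular $u,\theta\in C([0,\tau];C^{1}(\bar{\Omega}_i))$ and $\Gamma\in C([0,\tau];\cMH^2(\Omega))$ staying in $W^{3-2/p}_p$), together with the bulk equations and the constitutive laws, I would check that $(u(t),\theta(t),\Gamma(t))\in\cSM_\Gamma$ for every $t\in[0,\tau]$: all structural and boundary constraints of \eqref{phasemanigamma} are reproduced, and $0<\theta<\theta_c$ is preserved for small $t$ by continuity (and, if needed, the strong maximum principle). This produces a well-defined local flow map on $\cSM_\Gamma$.

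The semiflow (cocycle) identity then follows from uniqueness: two solution segments sharing data at an intermediate time coincide, hence they concatenate. Continuity of the flow in the topology of $\cSM_\Gamma$ is obtained by composing the continuous solution map of Theorem \ref{th:3.1} (on the transformed problem) with the smooth dependence of the Hanzawa transform and of the level function $\varphi_\Gamma$ on the base surface $\Gamma\in\cMH^2(\Omega,r)$, and with the characterization that $A\subset W^{3-2/p}_p(\Omega,r)$ is (relatively) compact iff $\Phi(A)\subset W^{3-2/p}_p(\Omega)$ is.

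The key point, and the main obstacle, is the continuation of a solution past the interval on which the original reference manifold $\Sigma$ remains admissible: the Hanzawa parameterization over $\Sigma$ is valid only while $\|h(t)\|_{C^1(\Sigma)}$ stays below a geometric threshold. To continue I would pick a time $t_1$ before this threshold, take $\Gamma(t_1)$ itself as a new reference manifold --- it still obeys a ball condition with a uniform radius by the bound just obtained --- and restart the solution there. Because the original data lie only in $\cSM_\Gamma$ and need not possess the extra regularity of the unweighted trace space when viewed over the new chart, I would invoke the time-weighted version, Corollary \ref{wellposed3}: its instantaneous-regularization statement guarantees that $(u(t_1),\theta(t_1),\Gamma(t_1))$ lies in the weighted trace space \eqref{tracesp-mu}, so the corollary applies and yields a solution segment in $\EE_\mu$ which, again by instantaneous regularization and uniqueness, glues onto the previous one within $\cSM_\Gamma$. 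Iterating this re-parameterization and taking the supremum of the existence times defines the maximal interval $[0,t_*)$ with $t_*=t_*(u_0,\theta_0,\Gamma_0)$; a standard Zorn-type maximality argument shows the maximal solution is unique and that $t_*$ does not depend on the chain of reference manifolds chosen. The real work is the bookkeeping: checking that on each piece the re-charted data stay uniformly within the admissible range, and that both the regularity class $W^{3-2/p}_p$ and the constraint set $\cSM_\Gamma$ are genuinely invariant under the re-parameterization maps --- this is carried out in \cite[Section 6]{PSW13} for constant surface tension, and since the Marangoni term $\sigma'(\theta_\Gamma)\nabla_\Gamma\theta_\Gamma$ and the kinetic-undercooling term $\gamma(\theta_\Gamma)j_\Gamma$ are of lower order, the argument goes through unchanged.
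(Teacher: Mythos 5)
Your proposal is correct and follows essentially the same route as the paper, whose proof consists precisely of applying Theorem~\ref{wellposed} and re-parameterizing the interface repeatedly, following \cite[Section 6]{PSW13}, to obtain the local semiflow on $\cSM_\Gamma$ and the maximal interval by uniqueness. One small quibble: data in $\cSM_\Gamma$ already possess exactly the unweighted trace regularity $(u,\theta)\in W^{2-2/p}_p(\Omega\setminus\Gamma)^{n+1}$, $\Gamma\in W^{3-2/p}_p$ required by Theorem~\ref{wellposed}, so your appeal to Corollary~\ref{wellposed3} at the restart is unnecessary (though harmless); the time-weighted theory is needed later for the regularization/compactness arguments, not for the semiflow construction itself.
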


\noindent
Again we note that the pressure $\pi$ as well as the phase flux $j_\Gamma$ and $\theta_\Gamma$
are dummy variables which are determined for each $t$ by the principal variables
$(u,\theta,\Gamma)$. In fact,
$$
j_\Gamma= [\![u\cdot\nu_\Gamma]\!]/[\![1/\rho]\!],
$$
$\theta_\Gamma$ is the trace of $\theta$, and $\pi$ is determined by the weak transmission problem
\begin{align*}
&(\nabla\pi|\nabla\phi/\rho)_{L_2(\Omega)}\\
& =(2\rho^{-1}{\rm div}(\mu(\theta)D(u))-u\cdot\nabla u|\nabla\phi)_{L_2(\Omega)},
\quad \phi\in H^1_{p^\prime}(\Omega),\; \phi=0 \mbox{ on } \Gamma,\\
&[\![\pi]\!]= 2[\![\mu(\theta)D(u)\nu_\Gamma]\!]\nu_\Gamma+\sigma(\theta_\Gamma)H_\Gamma - \lj 1/\rho\rj j_\Gamma^2
\; \mbox{ on } \Gamma,\\
& [\![\pi/\rho]\!]
= 2[\![(\mu(\theta)/\rho) D(u)\nu_\Gamma\cdot\nu_\Gamma]\!]
- [\![1/(2\rho^2)]\!]j_\Gamma^2-[\![\psi(\theta)]\!]- \gamma(\theta_\Gamma)j_\Gamma \;\mbox{ on } \Gamma,
\end{align*}
Concerning such transmission problems we refer to \cite{KPW10}.

\medskip

\subsection{Convergence}

There are several obstructions against global existence:
\begin{itemize}
\item {\em regularity}: the norms of either $u(t)$, $\theta(t)$, or $\Gamma(t)$  become unbounded;
\vspace{2mm}
\item {\em geometry}: the topology of the interface changes;\\
    or the interface touches the boundary of $\Omega$.
\vspace{2mm}
\item {\em well-posedness}: the temperature leaves the range $0<\theta(t)<\theta_c$.
\end{itemize}
Note that  the compatibility conditions,
\begin{align*}
&{\rm div}\, u(t)=0\mbox{  in } \Omega\setminus\Gamma(t), \quad u=0,\;\partial_\nu\theta=0\mbox{ on }\partial\Omega,\\
& P_\Gamma [\![u(t)]\!]=0,\;[\![\theta]\!]=0,\;P_\Gamma[\![\mu(\theta)(\nabla u+[\nabla u]^{\sf T})\nu_\Gamma]\!]
+ \sigma'(\theta_\Gamma)\nabla_\Gamma\theta_\Gamma
=0 \mbox{ on }\Gamma(t),
 \end{align*}
are preserved by the semiflow.

Let $(u,\theta,\Gamma)$ be a solution in the state manifold $\cSM_\Gamma$ with maximal interval $[0,t_*)$. By the
{\em uniform ball condition} we mean the existence of a radius $r_0>0$ such that for each $t$,
at each point $x\in\Gamma(t)$ there exists centers $x_i\in \Omega_i(t)$ such that
$B_{r_0}(x_i)\subset \Omega_i$ and $\Gamma(t)\cap \bar{B}_{r_0}(x_i)=\{x\}$, $i=1,2$. Note that this condition
bounds the curvature of $\Gamma(t)$, prevents parts of it to  touch the outer
boundary $\partial \Omega$, and to undergo topological changes. Hence if this condition holds, then the volumes  of the phases are preserved.

With this property, combining the local semiflow for \eqref{bulk}, \eqref{interface} with
the Ljapunov functional, i.e., the negative total entropy, and compactness we obtain the following result.

\bigskip

\begin{thm} \label{Qual} Let $p>n+2$, $\rho_1,\rho_2>0$, $\rho_1\neq\rho_2$, and suppose
the assumptions {\rm{\bf a), b), c)}} in Section 1.
Suppose that $(u,\theta,\Gamma)$ is a solution of
\eqref{bulk}, \eqref{interface} in the state manifold $\cSM_\Gamma$ on its maximal time interval $[0,t_*)$.
Assume that the  following conditions hold on $[0,t_*)$:
\begin{itemize}
\item[(i)]  \, $\sup_{0<t<t_*}(|u(t)|_{{W^{2-2/p}_p}},\ |\theta(t)|_{W^{2-2/p}_p},
\ |\Gamma(t)|_{W^{3-2/p}_p})<\infty$; \\
\vspace{-2mm}
\item[(ii)] \, $\Gamma(t)$ satisfies the uniform ball condition.\\
\vspace{-2mm}
\item[(iii)] \, $\inf_{0<t<t_*}\theta(t)>0$ and $\sup_{0<t<t_*}\theta(t)<\theta_c$ on $\bar{\Omega}$.
\end{itemize}
\smallskip
%\noindent
Then $t_*=\infty$, i.e.\ the solution exists globally, and its limit set $\omega_+(u,\theta,\Gamma)\subset\cE$ is non-empty. If further $(0,\theta_\infty,\Gamma_\infty)\in\omega_+(u,\theta,\Gamma)$ with $\Gamma_\infty$ connected, then the solution converges in $\cSM$ to this equilibrium.
\par
Conversely, if $(u(t),\theta(t),\Gamma(t))$ is a global solution in $\cSM$ which converges to an equilibrium $(0,\theta_*,\Gamma_*)\in\cE$  in $\cSM$ as $t\to\infty$, then (i), (ii) and (iii) are valid.
\end{thm}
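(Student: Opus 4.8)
The plan is to follow the strategy of \cite[Section~6]{PSW13}, adapted to the present system with variable surface tension and the additional surface temperature variable $\theta_\Gamma=\theta|_\Gamma$. The two pillars of the argument are a compactness property of the orbit and the Lyapunov structure from Section~2, supplemented by the generalized principle of linearized stability underlying Theorem~\ref{stability}. First I would establish relative compactness of the orbit: by Corollary~\ref{wellposed3} the transformed solution regularizes instantly, so after a Hanzawa transform onto a fixed reference manifold $\Sigma$ it lies in $\EE(\delta,t_*)$ for every $\delta>0$, and assumption (ii) guarantees that on the whole interval $[0,t_*)$ the interfaces $\Gamma(t)$ can be written as graphs $h(t)$ over a common $\Sigma$ with $h(t)$ bounded in $W^{3-2/p}_p(\Sigma)$ by assumption (i). Since for $\mu\in(1/2+(n+2)/2p,1)$ the embeddings $W^{2-2/p}_p(\Omega\setminus\Sigma)\ein W^{2\mu-2/p}_p(\Omega\setminus\Sigma)$ and $W^{3-2/p}_p(\Sigma)\ein W^{2+\mu-2/p}_p(\Sigma)$ are compact, conditions (i)--(iii) imply that $\{(u(t),\theta(t),\Gamma(t)):t\in[\delta,t_*)\}$ is relatively compact in the phase space corresponding to the time-weight $\mu$.

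Next I would deduce global existence: if $t_*<\infty$, choose $t_j\uparrow t_*$, extract from the compactness above a convergent subsequence of data, and apply Corollary~\ref{wellposed3} with these as initial values; since the existence time there depends only on the norm of the data in the $\mu$-trace space, this produces a uniform lower bound for the existence time and hence a continuation of the solution past $t_*$, contradicting maximality. Once $t_*=\infty$, the precompactness of the orbit makes $\omega_+(u,\theta,\Gamma)$ non-empty, compact, connected and invariant under the semiflow of Theorem~\ref{semiflow}. To show $\omega_+\subset\cE$ I would invoke that the total mass ${\sf M}$ and total energy ${\sf E}$ are conserved while $-\Phi$ is a strict Lyapunov functional (Section~2); since $\Phi$ is continuous on the phase space and the orbit is precompact, $\Phi(t)$ increases to a finite limit, so $\Phi$ is constant on $\omega_+$, and invariance forces the total entropy production along any solution issuing from a point of $\omega_+$ to vanish identically. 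As in Section~2(a) this yields $u\equiv0$, $\theta\equiv\theta_\Gamma\equiv\mathrm{const}$, $j_\Gamma\equiv0$, constant pressures in the phases and constant curvature, hence such a point is an equilibrium.

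Finally, suppose $(0,\theta_\infty,\Gamma_\infty)\in\omega_+$ with $\Gamma_\infty$ connected. By Theorem~\ref{propertiesL} this equilibrium is normally stable, so by the generalized principle of linearized stability (valid here by the adaptation leading to Theorem~\ref{stability}) there is a neighborhood $U$ of $(0,\theta_\infty,\Gamma_\infty)$ in $\cSM$ such that every solution entering $U$ exists globally and converges in $\cSM$ to some equilibrium contained in $U$; since the solution accumulates at $(0,\theta_\infty,\Gamma_\infty)$ it enters $U$, stays there by stability, and converges to an equilibrium $e\in U$, which must equal $(0,\theta_\infty,\Gamma_\infty)$ because $e\in\omega_+$ and $e$ is stable. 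The pressure and the remaining dummy variables ($j_\Gamma$, $\theta_\Gamma$) then converge through their defining transmission relations. For the converse, convergence of $(u,\theta,\Gamma)$ to $(0,\theta_*,\Gamma_*)\in\cE$ in $\cSM$ bounds the norms in (i); $\Gamma(t)\to\Gamma_*$ in $W^{3-2/p}_p$ with $\Gamma_*$ a finite union of spheres compactly contained in $\Omega$ yields the uniform ball condition (ii) for large $t$, the bounded part being covered by compactness; and $\theta_*\in(0,\theta_c)$ together with the state-manifold constraint $0<\theta<\theta_c$ gives (iii).

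The main obstacle I expect is the interplay between the compactness/continuation step and the identification $\omega_+\subset\cE$: one must carry out the Hanzawa reduction uniformly along the orbit, control the non-dynamic pressure via the weak transmission problem so that the reduced state $(u,\theta,h)$ alone governs compactness, and make the LaSalle-type invariance argument rigorous in the Banach-manifold setting $\cMH^2(\Omega)$. The step upgrading ``the solution enters a neighborhood of a stable equilibrium'' to ``it converges to that specific equilibrium in $\cSM$'' rests entirely on Theorem~\ref{stability}, whose proof (handling the extra variable $\vartheta_\Sigma$ and the nonlinear state manifold $\cSM$) is the genuinely new analytic input and which I would treat as granted here.
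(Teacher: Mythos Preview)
Your proposal is correct and follows precisely the approach the paper takes: the paper's own proof consists solely of the sentence ``The assertions follow by similar arguments as in the proof of Theorem~6.2 in \cite{PSW13},'' and what you have written is an accurate sketch of that argument, adapted to the additional surface variable $\vartheta_\Sigma$ and the temperature constraint~(iii). Your identification of the key ingredients---relative compactness via the time-weighted setting of Corollary~\ref{wellposed3}, the continuation argument, the strict Lyapunov property of $-\Phi$ for the LaSalle step, and the invocation of Theorem~\ref{stability} for convergence near a normally stable equilibrium---matches the structure of \cite[Section~6]{PSW13} exactly, and your observation that convergence to \emph{some} equilibrium forces $\omega_+$ to be a singleton (hence equal to the given $(0,\theta_\infty,\Gamma_\infty)$) is the correct way to close the argument.
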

\begin{proof}
The assertions follow by similar arguments as in the  proof of Theorem~6.2 in \cite{PSW13}.
\end{proof}

\bigskip

% ------------------------------------------------------------------------

\begin{thebibliography}{99}
%\bibitem{test} A. B. C. Test, \textit{On a Test.} J. of Testing
%\textbf{88} (2000), 100--120.
%\bibitem{latex} G. Gr\"atzer, \textit{Math into \LaTeX.} 3rd Edition,
%Birkh\"auser, 2000.

\bibitem{Gur07} D.M. Anderson, P. Cermelli, E. Fried, M.E. Gurtin, G.B. McFadden,
\textit{General dynamical sharp-interface conditions for phase transformations in viscous heat-conducting fluids.}
{J. Fluid Mech.} \textbf{581} (2007), 323--370.

\bibitem{DaWa07} K.S.~Das, C.A.~Ward, 
\textit{Surface thermal capacity and its effect on the boundary conditions of fluid-fluid interfaces.}
{Physical Reviews E} \textbf{75} 065303 (2007).

\bibitem{DHP03} R. Denk, M. Hieber, and J. Pr\"uss,
\textit{${\mathcal R}$-boundedness, Fourier multipliers, and problems of
elliptic and parabolic type}, 
AMS Memoirs \textbf{788}, Providence, R.I. (2003).

\bibitem{DHP07}R. Denk, M. Hieber, J. Pr\"uss,
\textit{Optimal $L\sp p$-$L\sp q$-estimates for parabolic boundary value problems with inhomogeneous data.}
{Math. Z.} \textbf{257}  (2007),  193--224.

\bibitem{DPZ08}R. Denk, J. Pr\"uss, R. Zacher,
\textit{Maximal $L_p$-regularity of parabolic problems with boundary dynamics of relaxation type.}
{J. Funct. Anal.} \textbf{255}  (2008), 3149--3187.

\bibitem{IsTa06} M.~Ishii, H.~Takashi
\textit{Thermo-fluid dynamics of two-phase flow.}
Springer, New York, 2006.

\bibitem{KPW10} M.~K\"ohne, J.~Pr\"uss, M.~Wilke, 
\textit{Qualitative behaviour of solutions for the two-phase Navier-Stokes equations with  surface tension.} 
{Math.~Ann.} \textbf{356} (2013), 737-792.

\bibitem{Pru03} J.~Pr\"uss, 
\textit{Maximal regularity for evolution equations in $L_p$-spacess.}
{Conf. Sem. Mat. Univ. Bari} \textbf{285} (2003), 1--39. 

\bibitem{PrSi04} J.~Pr\"uss, G.~Simonett, 
\textit{Maximal regularity for evolution equations in weighted $L_p$-spaces.}
{Arch. Math.} \textbf{82} (2004), 415--431. 

\bibitem{PSSS12} J.~Pr\"uss, Y.~Shibata, S.~Shimizu, G.~Simonett,
\textit{On well-posedness of incompressible two-phase flows with phase transition: The case of equal densities.}
{Evolution Equations \& Control Theory} \textbf{1} (2012), 171--194.

\bibitem{PrSh12} J.~Pr\"uss, S.~Shimizu,
\textit{On well-posedness of incompressible two-phase flows with phase transitions: The case of non-equal densities.}
{J.~Evol. Equ.} \textbf{12} (2012), 917--941.

\bibitem{PSW13} J.~Pr\"uss, S.~Shimizu, M.~Wilke,
\textit{On the qualitative behaviour of incompressible two-phase flows with phase transition: The case of non-equal densities.}
{Comm. Partial Differential Equations} \textbf{39} (2014), 1236-1283.

\bibitem{PSW11} J. Pr\"uss, G. Simonett, M. Wilke,
\textit{On thermondynamically consistent Stefan problems with variable surface energy.}
Submitted.

\bibitem{PSZ09} J. Pr\"uss, G. Simonett, R. Zacher,
\textit{Convergence of solutions to equilibria for nonlinear parabolic problems.}
{J.~Diff.~Equations} \textbf{246} (2009), 3902--3931.

\bibitem{PSZ10} J. Pr\"uss, G. Simonett, R. Zacher,
\textit{Qualitative behaviour of solutions for thermodynamically consistent Stefan problems with surface tension.}
{Arch.~Rat.~Mech.~Anal.} \textbf{207} (2013), 611--667.

\bibitem{PSZ12} J.~Pr\"uss, G.~Simonett, R.~Zacher,
\textit{On the qualitative behaviour of incompressible two-phase flows with phase transition: The case of equal densities.}
{Interfaces Free Bound.} \textbf{15} (2013), 405--428.

\end{thebibliography}
\end{document}